\numberwithin{equation}{section}
\newtheorem{theorem}{Theorem}
\renewcommand{\O}{\mathcal{O}}
\begin{document}
\title{Riemann-Hilbert analysis for a Nikishin system}
\author{Guillermo L\'opez Lagomasino and Walter Van Assche \\ Universidad Carlos III de Madrid and KU Leuven}
\date{\today}
\maketitle

\begin{abstract}
In this paper we give the asymptotic behavior of type I multiple orthogonal polynomials for a Nikishin system of order two
with two disjoint intervals. We use the Riemann-Hilbert problem for multiple orthogonal polynomials and the steepest descent
analysis for oscillatory Riemann-Hilbert problems to obtain the asymptotic behavior in all relevant regions of the complex plane.
\end{abstract}

\section{Introduction}  \label{sec1}

It is well known \cite[Chap. 4]{NikiSor} that the polynomials appearing in Hermite-Pad\'e approximation satisfy a number of orthogonality relations,
and these polynomials are therefore known as multiple orthogonal polynomials (polyorthogonal polynomials, Hermite-Pad\'e polynomials).
Let $\vec{n} = (n_1,\ldots,n_r) \in \mathbb{Z}_+^r$ be a multi-index
and $|\vec{n}| = n_1+\cdots+n_r$.
Type I multiple orthogonal polynomials for measures $(\mu_1,\ldots,\mu_r)$ on the real line,
for which all the moments exist, 
are $(A_{\vec{n},1},\ldots,A_{\vec{n},r})$, where $\deg A_{\vec{n},j} \leq n_j-1$, for which
\[   \sum_{j=1}^r  \int A_{\vec{n},j}(x) x^k \, d\mu_j(x) = 0, \qquad 0 \leq k \leq |\vec{n}|-2. \]
The type II multiple orthogonal polynomial $P_{\vec{n}}$ is the polynomial of degree $\leq |\vec{n}|$ for which
\[   \int P_{\vec{n}}(x) x^k \, d\mu_j(x) = 0, \qquad 0 \leq k \leq n_j-1, \]
for all $j$ with $1 \leq j \leq r$. The corresponding Hermite-Pad\'e approximation for the functions
\[    f_j(z) = \int \frac{d\mu_j(x)}{z-x} , \qquad  1 \leq j \leq r, \]
for type I is that there exists a polynomial $B_{\vec{n}}$ such that
\[   \sum_{j=1}^r  A_{\vec{n},j}(z) f_j(z) - B_{\vec{n}}(z) = \O(z^{-|\vec{n}|}), \qquad z \to \infty, \]
and for type II Hermite-Pad\'e approximation there are $r$ polynomials $Q_{\vec{n},j}$ such that
\[    P_{\vec{n}}(z) f_j(z) - Q_{\vec{n},j}(z) = \O(z^{-n_j-1}), \qquad z \to \infty, \]
for $1 \leq j \leq r$. The existence of these multiple orthogonal polynomials is easy to justify, see below. However, their uniqueness, in general, is not guaranteed and one needs extra conditions on the system of measures $(\mu_1,\ldots,\mu_r)$, apart from the existence of all the moments. Two systems of measures for which all multi-indices have  unique solutions are Angelesco systems (the measures $\mu_j$ are supported on disjoint intervals)
and Nikishin systems (the measures $\mu_j$ are supported on the same interval, but their Radon-Nikodym derivatives can be described
in terms of a measure on a disjoint interval; see further for a more precise definition for $r=2$). 

Nikishin systems were introduced in 1980 by Nikishin \cite{Nikishin}, who claimed that multi-indices $\vec{n}=(n_1,n_2,\ldots,n_r) \in \mathbb{Z}_+^r \setminus \{\bf 0\}$ for which $n_1 \geq n_2 \geq \cdots \geq n_r$ are normal; that is, the corresponding multiple orthogonal polynomials exhibit maximum degree. Driver and Stahl \cite[p. 171]{DriverStahl} proved that all the multi-indices of a Nikishin system of order two are
normal, so that a Nikishin system of order 2 is perfect. Bustamante and L\'opez \cite{BusLop} had all the ingredients for such a proof but did not state
it or deduce it in their paper.
Recently Fidalgo Prieto and L\'opez Lagomasino \cite{FidLop} proved that every Nikishin system of order $r \geq 2$ is perfect.

We will be investigating multiple orthogonal polynomials 
for a Nikishin system of order two.
In particular, we will consider a Nikishin system of two positive measures $(\mu_1,\mu_2)$ on an interval $[a,b]$, for
which
\begin{equation}  \label{eq:w}
   d\mu_2(x) = w(x)\, d\mu_1(x), \quad w(x) =  \int_c^d \frac{d\sigma(t)}{x-t} ,
\end{equation}
where $\sigma$ is a positive measure on $[c,d]$ and the intervals $[a,b]$ and $[c,d]$ are disjoint.
We will assume (without loss of generality) that $c < d < a < b$, so that the interval $[c,d]$ is to the left of $[a,b]$ and hence the function $w$ in \eqref{eq:w} is positive on $[a,b]$. Furthermore, we assume that $\mu_1$ and $\sigma$ are absolutely continuous (with respect to Lebesgue measure), with
\begin{equation}  \label{eq:mu1}
    d\mu_1(x) = w_1(x) \,dx, \quad w_1(x) = (x-a)^\alpha (b-x)^\beta h_1(x), \qquad x \in [a,b],
\end{equation}
and
\begin{equation}   \label{eq:sigma}
   d\sigma(t) = w_2(t)\, dt , \quad  w_2(t) = (t-c)^\gamma (d-t)^\delta h_2(t), \qquad t \in [c,d],
\end{equation}
where $h_1$ is analytic in a neighborhood $\Omega_1$ of $[a,b]$ and $h_2$ is analytic in a neighborhood $\Omega_2$ of $[c,d]$, $h_1$ and $h_2$ have no
zeros at the endpoints of the intervals, and $\alpha,\beta,\gamma,\delta > -1$.

The asymptotic behavior of the ratio of two neighboring multiple orthogonal polynomials for Nikishin systems was investigated
earlier in \cite{AKLR}, \cite{ALR}, \cite{Abey}, \cite{FLLS}. In this paper, we wish to obtain strong asymptotics, i.e., asymptotics of the
individual polynomials, uniformly in the complex plane using the Riemann-Hilbert approach.
Using a different method, Aptekarev \cite{Apt} gave the strong asymptotic behavior of (type II) multiple orthogonal polynomials of a general Nikishin system ($r \geq 2$) for diagonal sequences $\vec{n} = (n,n,\ldots,n), n \in \mathbb{Z}_+.$

The Riemann-Hilbert problem for multiple orthogonal polynomials
was formulated in \cite{WVAGerKuijl}, and the authors gave the first few transformations of the Riemann-Hilbert problem
for Nikishin systems, but they did not perform the steepest descent analysis to get the full asymptotic behavior of the multiple orthogonal polynomials. Foulqui\'e Moreno \cite{Ana} showed how to set up the Riemann-Hilbert problem for a generalized Nikishin system (see \cite{GonRakhSor}),
but also did not work out the steepest descent analysis. For an Angelesco system the Riemann-Hilbert analysis was worked out in \cite{BranFidFM},
but their analysis is incomplete since they did not include the local analysis near the endpoints of the intervals (local parametrices).
The Riemann-Hilbert analysis for a system of measures (or Markov functions) generated by graphs was done in \cite{AptLysov}.
In fact, the diagonal case $m=n$ for type II multiple orthogonal polynomials is contained in \cite{AptLysov} and they used very much the same 
Riemann-Hilbert technique as we do in the present paper.
A full analysis of the Riemann-Hilbert problem for particular examples of multiple orthogonal polynomials is given in \cite{BleherKuijl} for 
multiple Hermite polynomials behaving like an Angelesco system, and in \cite{Lysov} for multiple Laguerre polynomials which 
behave like a Nikishin system. Two new phenomena in the steepest descent analysis of these Riemann-Hilbert problems were already demonstrated in 
\cite{AptBleherKuijl}, \cite{AptWVAYatt}, \cite{LysovWiel}: the global opening of the lenses and the transformation based on the generalized Nikishin equilibrium potentials. The Riemann-Hilbert analysis for ray sequences of indices, where $n/m \to \gamma$, was recently done in \cite{Yatt} (for an Angelesco system) and \cite{AptBogYatt} (for Frobenius-Pad\'e approximants). There is also high interest in the asymptotics of type I Nikishin systems with complex singular points, see \cite{RakhSuetin, KKPS, KPSC} and the references therein.

As mentioned before, there are two types of multiple orthogonal polynomials (and Hermite-Pad\'e approximants). In this paper, we will mainly
focus on type I multiple orthogonal polynomials, and the main result will be the asymptotic behavior
of the type I multiple orthogonal polynomials, which will be given in Section \ref{sec8}.
In Section \ref{sec9} we will work out the asymptotic behavior of the type II multiple orthogonal polynomials.

Since we are only dealing with $r=2$, we can simplify the notation.
A type I multiple orthogonal polynomials for the multi-index $(n,m)$ of the Nikishin system $(\mu_1,\mu_2)$ is given as a vector of two polynomials $(A_{n,m},B_{n,m})\neq (0,0)$, where $\deg A_{n,m} \leq n-1$ and $\deg B_{n,m} \leq m-1$, for which
\begin{equation}  \label{orthAB}
   \int_a^b  \Bigl( A_{n,m}(x) + w(x) B_{n,m}(x) \Bigr) x^k w_1(x)\, dx = 0, \qquad 0 \leq k \leq n+m-2,
\end{equation}
A type II multiple orthogonal polynomial $P_{n,m}$ for the multi-index $(n,m)$ is a polynomial of degree $\leq n+m$, not identically equal to zero,
for which
\[    \int_a^b x^k P_{n,m}(x) w_1(x)\, dx = 0, \qquad 0 \leq k \leq n-1, \]
\[    \int_a^b x^k P_{n,m}(x) w(x)w_1(x)\, dx = 0, \qquad 0 \leq k \leq m-1.  \]

The existence of $(A_{n,m},B_{n,m})$ and $P_{n,m}$ reduces (for each type) to solving a homogeneous system, on the coefficients of the polynomials, with one more equation than unknowns. So nontrivial solutions are guaranteed. Since $(\mu_1,\mu_2)$ is a perfect system, we know that for each $(n,m)$ any solution of one type or the other must verify  that $\deg A_{n,m} = n-1, \deg B_{n,m} = m-1,$ and $\deg P_{n,m} = n+m$. Other immediate consequences of perfectness is that
$(A_{n,m},B_{n,m})$ and $P_{n,m}$ are defined uniquely except for constant factors and
\[
 \int_a^b  \Bigl( A_{n,m}(x) + w(x) B_{n,m}(x) \Bigr) x^{n+m-1} w_1(x)\, dx = \kappa_{n,m} \neq 0,
\]
\[ \int_a^b x^n P_{n,m}(x) w_1(x)\, dx \neq 0, \qquad \int_a^b x^m P_{n,m}(x) w(x)w_1(x)\, dx \neq 0.\]
In the rest of the paper, we normalize $(A_{n,m},B_{n,m})$ so that $\kappa_{n,m} = 1$ and $P_{n,m}$ to be monic.

The perfectness of a Nikishin system of order 2 such as ours is a consequence of the following (extended) AT property (see \cite{Nikishin} for the original definition). For any pair of polynomials $(p,q) \neq (0,0), \deg p \leq n-1, \deg q \leq m-1$ with real coefficients and any $(n,m)$ the linear form $p + q w$
has at most $n+m-1$ zeros in $\mathbb{C} \setminus [c,d]$. For completeness we include a proof.

Let $n\geq m$ and assume that $p + q w$ has at least $n+m$ zeros in $\mathbb{C} \setminus [c,d]$. Since the coefficients of $(p,q)$ are real and $w$ is symmetric with respect to $\mathbb{R}$, the zeros of $p + q w$ come in conjugate pairs. Therefore, there exists a polynomial $W_{n,m}, \deg W_{n,m} \geq n+m,$ with real coefficients and zeros in $\mathbb{C}\setminus [c,d]$ such that $x^{k}(p + q w)/W_{n,m}, k=0,\ldots,m-1$ is holomorphic in $\mathbb{C}\setminus [c,d]$ and has a zero of order $\geq 2$ at infinity. Take a contour $\Gamma$ surrounding $[c,d]$ once in the positive direction
and separating it from $\infty$, with $[c,d]$ inside $\Gamma$ and $\infty$ and all the zeros of $W_{n,m}$ outside. Using Cauchy's theorem,  the definition of $w$, Fubini's theorem, and Cauchy's integral formula it follows that
\[0=\int_{\Gamma} \frac{z^k(p + qw)(z)\, dz}{W_{n,m}(z)}= \int_{\Gamma} \frac{z^k q(z)w(z)\, dz}{W_{n,m}(z)} =\int_c^d \frac{x^{k}q(x)}{W_{n,m}(x)}\, d\sigma(x), \quad k=0,\ldots,m-1.\]
Whence, $q$ has at least $m$ sign changes on $(c,d)$, but this is not possible since it has degree $\leq m-1$. Consequently, $q\equiv 0$ which implies that also $p\equiv 0$. Since $(p,q)\neq (0,0)$ we arrive at a contradiction.

The case when $n < m$ reduces to the previous one following the same scheme taking into consideration (see \cite[Lemma 6.3.5]{ST}) the well known fact that
\begin{equation}
\label{inverse}
\frac{1}{w(z)} = \ell(z) - \int_c^d \frac{d\tilde{\sigma}(x)}{z-x} = \ell(z) - \tilde{w}(z),
\end{equation}
where $\ell$ is a polynomial of degree $\leq 1$ and $\tilde{\sigma}$ is a finite positive measure on $[c,d]$. This transformation allows to view $(\mu_2,\mu_1)$ also as a Nikishin system. Incidentally, for general Nikishin systems (with $r\geq 2$), the proof of perfectness relies on the same basic ideas of the AT property for more general linear forms involving Nikishin systems and the reduction to the case when $n_1 \geq\cdots\geq n_r$, but now the reduction formulas turn out to be quite intricate. Unless otherwise stated, in the rest of the paper we will assume that $n \geq m$; however, taking account of \eqref{inverse}, the asymptotic formulas we obtain remain valid for sequences of multi-indices for which $n < m$ and appropriate conditions hold.

The Riemann-Hilbert problem for the type I multiple orthogonal polynomials is to find a matrix function
$X: \mathbb{C} \to \mathbb{C}^{3\times 3}$ such that
\begin{enumerate}
  \item $X$ is analytic in $\mathbb{C} \setminus [a,b]$.
  \item The boundary values $X_{\pm}(x) = \lim_{\epsilon \to 0+} X(x\pm i\epsilon)$ exist for $x \in (a,b)$ and satisfy
  \[   X_+(x) = X_-(x) \begin{pmatrix} 1 & 0 & 0 \\
                      -2\pi i w_1(x) & 1 & 0 \\
                      -2\pi i w(x)w_1(x) & 0 & 1 \end{pmatrix}, \qquad x \in (a,b).  \]
  \item Near infinity $X$ has the behavior
  \[   X(z) = \Bigl( \mathbb{I} + \O(1/z) \Bigr) \begin{pmatrix}  z^{-n-m} & 0 & 0 \\ 0 & z^n & 0 \\ 0 & 0 & z^m \end{pmatrix},
      \qquad z \to \infty. \]
  \item Near $a$ and $b$ the behavior is
  \[  X(z) = \begin{pmatrix} \O(r_a(z)) & \O(1) & \O(1) \\ \O(r_a(z)) & \O(1) & \O(1) \\ \O(r_a(z)) & \O(1) & \O(1) \end{pmatrix}, \qquad z \to a, \]
  \[  X(z) = \begin{pmatrix} \O(r_b(z)) & \O(1) & \O(1) \\ \O(r_b(z)) & \O(1) & \O(1) \\ \O(r_b(z)) & \O(1) & \O(1) \end{pmatrix}, \qquad z \to b, \]
  where
  \[    r_a(z) = \begin{cases}    |z-a|^\alpha, & -1 < \alpha < 0, \\
                                   \log |z-a|, & \alpha = 0, \\
                                   1,          &  \alpha > 0, \end{cases} \quad
        r_b(z) = \begin{cases}    |z-b|^\beta, & -1 < \beta < 0, \\
                                   \log |z-b|, & \beta = 0, \\
                                   1,          &  \beta > 0. \end{cases} \]
\end{enumerate}
The solution of this Riemann-Hilbert problem is
\[    X(z) = \begin{pmatrix}
             \displaystyle \int_a^b \frac{A_{n,m}(x)+w(x)B_{n,m}(x)}{z-x} w_1(x)\, dx &  A_{n,m}(z) &  B_{n,m}(z) \\
             \displaystyle c_1 \int_a^b \frac{A_{n+1,m}(x)+w(x)B_{n+1,m}(x)}{z-x}w_1(x)\, dx & c_1 A_{n+1,m}(z) & c_1 B_{n+1,m}(z) \\
             \displaystyle  c_2 \int_a^b \frac{A_{n,m+1}(x)+w(x)B_{n,m+1}(x)}{z-x}w_1(x)\, dx & c_2 A_{n,m+1}(z) & c_2 B_{n,m+1}(z)
             \end{pmatrix}, \]
where $c_1 =c_1(n,m)$ and $c_2=c_2(n,m)$ are such that
\[    c_1 A_{n+1,m}(z) = z^{n} + \textrm{lower order terms}, \qquad c_2 B_{n,m+1}(z) = z^{m} + \textrm{lower order terms}. \]
This Riemann-Hilbert problem was first formulated in \cite{WVAGerKuijl}, but we added the condition near the endpoints $a$ and $b$,
which is not needed when one has weights on the full real line. Such endpoint conditions were first introduced in \cite{KMVAV} for
orthogonal polynomials on $[-1,1]$.

\section{First transformation}  \label{sec2}
The weight function $w$ in the second measure $\mu_2$ is given in \eqref{eq:w} and it is a Stieltjes transform of a weight function on $[c,d]$,
so that $[c,d]$ is a branch cut for $w$. This is not yet visible in our Riemann-Hilbert problem. Our first transformation is intended
to bring these singularities into the Riemann-Hilbert problem and it was already suggested in \cite{WVAGerKuijl}.
We assume that $m \leq n$ and then the transformation is
\begin{equation}  \label{eq:U}
    U(z) = X(z) \begin{pmatrix} 1 & 0 & 0 \\
                                  0 & 1 & 0 \\
                                  0 & \displaystyle  \int_c^d \frac{d\sigma(t)}{z-t} & 1
                 \end{pmatrix}.
\end{equation}
Then $U: \mathbb{C} \to \mathbb{C}^{3\times 3}$ satisfies the following Riemann-Hilbert problem
\begin{enumerate}
  \item $U$ is analytic on $\mathbb{C} \setminus ([a,b] \cup [c,d])$.
  \item $U$ has jumps on $(a,b)$ and $(c,d)$ which are given by
  \[   U_+(x) = U_-(x) \begin{pmatrix} 1 & 0 & 0 \\ -2\pi i w_1(x) & 1 & 0 \\ 0 & 0 & 1 \end{pmatrix}, \qquad x \in (a,b), \]
  \[   U_+(x) = U_-(x) \begin{pmatrix} 1 & 0 & 0 \\ 0 & 1 & 0 \\ 0 &  -2\pi i w_2(x) & 1 \end{pmatrix}, \qquad x \in (c,d). \]
  \item Near infinity $U$ has the behavior (here we need $m \leq n$)
  \[    U(z) = \Bigl( \mathbb{I} + \O(1/z) \Bigr) \begin{pmatrix}  z^{-n-m} & 0 & 0 \\ 0 & z^n & 0 \\ 0 & 0 & z^m \end{pmatrix}, \qquad
             z \to \infty. \]
  \item Near $a$ and $b$ the behavior is
  \[  U(z) = \begin{pmatrix} \O(r_a(z)) & \O(1) & \O(1) \\ \O(r_a(z)) & \O(1) & \O(1) \\ \O(r_a(z)) & \O(1) & \O(1) \end{pmatrix}, \qquad z \to a, \]
  \[  U(z) = \begin{pmatrix} \O(r_b(z)) & \O(1) & \O(1) \\ \O(r_b(z)) & \O(1) & \O(1) \\ \O(r_b(z)) & \O(1) & \O(1) \end{pmatrix}, \qquad z \to b, \]
  and near $c$ and $d$
  \[  U(z) = \begin{pmatrix} \O(1) & \O(r_c(z)) & \O(1) \\ \O(1) & \O(r_c(z)) & \O(1) \\ \O(1) & \O(r_c(z)) & \O(1) \end{pmatrix}, \qquad z \to c, \]
  \[  U(z) = \begin{pmatrix} \O(1) & \O(r_d(z)) & \O(1) \\ \O(1) & \O(r_d(z)) & \O(1) \\ \O(1) & \O(r_d(z)) & \O(1) \end{pmatrix}, \qquad z \to d, \]
where
 \[    r_c(z) = \begin{cases}    |z-c|^\gamma, & -1 < \gamma < 0, \\
                                   \log |z-c|, & \gamma = 0, \\
                                   1,          &  \gamma > 0, \end{cases} \quad
        r_d(z) = \begin{cases}    |z-d|^\delta, & -1 < \delta < 0, \\
                                   \log |z-d|, & \delta = 0, \\
                                   1,          &  \delta > 0. \end{cases} \]
\end{enumerate}

\section{The vector equilibrium problem}  \label{sec3}

In this section, we assume that $n$ and $m=m(n)$ are related in such a way that
\begin{equation}
\label{limq}
  \lim_{n\to \infty} \frac{m}{n+m} = q_1, \qquad 0 < q_1 < 1.
\end{equation}
Since we will be working with the case when $m\leq n$, in fact $0 < q_1 \leq 1/2$. In the next section, it will be required that $q_1$ is a rational number.

The asymptotic distribution of the zeros of the type I (and type II) multiple orthogonal polynomials has been well studied and is given in terms of the solution of a vector equilibrium
problem for two probability measures $(\nu_1,\nu_2)$, where $\nu_1$ is supported on $[a,b]$ and $\nu_2$ is supported on $[c,d]$ or a subset
$[c^*,d]$ of $[c,d]$.
This was first worked out by Nikishin \cite{Nikishin86} and can be found in \cite[Chapter 5, \S7]{NikiSor}; for a more general
setting we refer to \cite{GonRakhSor} and \cite{FLLS}.
The support of $\nu_2$ can be a subset $[c^*,d]$ of $[c,d]$ whenever
$q_1 < \frac12$, see \cite[\S 5.6]{GonRakhSor}. In fact if $a,b,d,q_1$ are fixed, then there exists a $c^* < d$ such that $\textup{supp}(\nu_2)=[c^*,d]$
whenever $c < c^*$ and $\textup{supp}(\nu_2) = [c,d]$ whenever $c \geq c^*$.
Denote the logarithmic potential of a measure $\nu$ by
\[    U(z;\nu) = \int \log \frac{1}{|z-y|}\, d\nu(y). \]
When $\textup{supp}(\nu_2)=[c,d]$ the variational relations for the equilibrium problem are
\begin{align}
 2 U(x;\nu_1) - q_1 U(x;\nu_2) &= \ell_1, \qquad x \in [a,b], \label{eq:Uab} \\
 2q_1 U(x;\nu_2) - U(x;\nu_1)  &= \ell_2, \qquad x \in [c,d], \label{eq:Ucd}
\end{align}
where $\ell_1,\ell_2$ are constants, and when $\textup{supp}(\nu_2)=[c^*,d]$ with $c < c^*$, then
\begin{align}
 2 U(x;\nu_1) - q_1 U(x;\nu_2) &= \ell_1, \qquad x \in [a,b], \label{eq:Uab2} \\
 2q_1 U(x;\nu_2) - U(x;\nu_1)  &= \ell_2, \qquad x \in [c^*,d], \label{eq:Uc*d} \\
 2q_1 U(x;\nu_2) - U(x;\nu_1)  & > \ell_2, \qquad x \in [c,c^*).  \label{eq:Ucc*} 
\end{align}
It is well known that this equilibrium problem has a unique solution; for example, see \cite[Chapter 5, \S4]{NikiSor}.

The method goes as follows.  Notice that  the function $A_{n,m}(z) + w(z) B_{n,m}(z)$ has exactly $n+m-1$ simple zeros on $(a,b)$ at points $y_1,\ldots,y_{n+m-1}$, which depend on the
multi-index $(n,m)$ and no other zeros in $\mathbb{C}\setminus [c,d]$. Indeed, the AT property implies that this linear form can have in $\mathbb{C}\setminus [c,d]$ at most $n+m-1$ zeros whereas \eqref{orthAB} entails that it has at least $n+m-1$ sign changes on $(a,b)$. If we let $H_{n,m}$ be the monic polynomial of degree $n+m-1$ with simple zeros at these points, then from \eqref{orthAB}
\begin{equation}
\label{orto1}
 \int_a^b  H_{n,m}(x) \frac{ A_{n,m}(x) + w(x) B_{n,m}(x) }{H_{n,m}(x)} x^{k} w_1(x)\, dx = 0,\qquad k=0,\ldots,n+m-2.
\end{equation}
That is, $H_{n,m}$ is the monic orthogonal polynomial of degree $n+m-1$ on $[a,b]$ for the (varying) measure $\pm \frac{A_{n,m}(x)+w(x)B_{n,m}(x)}{H_{n,m}(x)}\, d\mu_1(x)$ (notice that ${(A_{n,m}+wB_{n,m})}/{H_{n,m}}$ has constant sign on $[a,b]$). The sign $\pm$ is such that the measure is positive on $[a,b]$.
If $n \geq m$, then $A_{n,m}(z) + w(z) B_{n,m}(z) = \O(z^{n-1})$ as $z \to \infty$, so that
$\Bigl(A_{n,m}(z) + w(z) B_{n,m}(z)\Bigr)/H_{n,m}(z) = \O(z^{-m})$ as $z \to \infty$. Hence, by Cauchy's theorem (for the exterior of $[c,d]$)
one has
\[  \frac{1}{2\pi i} \int_{\Gamma} \frac{A_{n,m}(z)+w(z)B_{n,m}(z)}{H_{n,m}(z)} z^k \, dz = 0, \qquad 0 \leq k \leq m-2, \]
whenever $\Gamma$ is a closed contour encircling $[c,d]$. If we take the contour in such a way that it stays away from $[a,b]$, then
\[  \frac{1}{2\pi i} \int_\Gamma \frac{A_{n,m}(z)}{H_{n,m}(z)} z^k \, dz = 0, \]
since the integrand is analytic on and inside $\Gamma$. Changing the order of integration and using \eqref{eq:w} then gives
\begin{equation}
\label{orto2}
   \int_c^d B_{n,m}(x) x^k  \frac{d\sigma(x)}{H_{n,m}(x)} = 0, \qquad 0 \leq k \leq m-2,
\end{equation}
so that $B_{n,m}$ is an orthogonal polynomial of degree $m-1$ on $[c,d]$ for the (varying) measure $(-1)^{n+m-1} d\sigma(x)/H_{n,m}(x)$, where the sign
makes the measure positive on $[c,d]$. Furthermore, one has
\begin{equation}   \label{AB/H}
   \frac{A_{n,m}(x)+w(x)B_{n,m}(x)}{H_{n,m}(x)} = \int_c^d \frac{B_{n,m}(t)}{x-t} \frac{d\sigma(t)}{H_{n,m}(t)}, \qquad x \notin [c,d],
\end{equation}
because we can write the left hand side using Cauchy's theorem as
\[  \frac{A_{n,m}(x)+w(x)B_{n,m}(x)}{H_{n,m}(x)} = -\frac{1}{2\pi i} \int_\Gamma \frac{A_{n,m}(z)+w(z)B_{n,m}(z)}{H_{n,m}(z)} \frac{dz}{z-x}, \]
where $\Gamma$ is a contour going counterclockwise around $[c,d]$ but not around $x$. Note that
\[       \frac{1}{2\pi i} \int_\Gamma \frac{A_{n,m}(z)}{H_{n,m}(z)} \frac{dz}{z-x} = 0, \]
since the integrand is analytic on and inside $\Gamma$. Consequently, using \eqref{eq:w} and interchanging the order of integration
\begin{eqnarray*}
   \frac{1}{2\pi i} \int_\Gamma \frac{w(z)B_{n,m}(z)}{H_{n,m}(z)} \frac{dz}{z-x}
  & = &\int_c^d d\sigma(t) \frac{1}{2\pi i} \int_\Gamma \frac{B_{n,m}(z)}{H_{n,m}(z)} \frac{dz}{(z-x)(z-t)} \\
  & = & - \int_c^d \frac{B_{n,m}(t)}{H_{n,m}(t)}  \frac{d\sigma(t)}{x-t},
\end{eqnarray*}
from which \eqref{AB/H} follows.

Due to \eqref{orto1}-\eqref{orto2}, the vector equilibrium problem corresponds to combining the equilibrium condition \eqref{eq:Uab}
for the asymptotic zero distribution $\nu_1$  of $H_{n,m}$ with external field
\[   \lim_{n,m \to \infty}  -\frac{1}{n+m} \log \frac{|A_{n,m}(x)+w(x)B_{n,m}(x)|}{|H_{n,m}(x)|}, \qquad x \in [a,b], \]
with the equilibrium condition \eqref{eq:Ucd} for the asymptotic distribution $\nu_2$ of the zeros of $B_{n,m}$ with external field
\[  \lim_{m \to \infty} \frac{1}{m} \log |H_{n,m}(x)|, \qquad x \in [c,d].  \]
Clearly, the external field on $[c,d]$ is $-U(x;\nu_1)/q_1$ (up to an additive constant). This gives the variational equation \eqref{eq:Ucd}
or \eqref{eq:Uc*d}--\eqref{eq:Ucc*}.
For the external field on $[a,b]$ we use
\eqref{AB/H} and the orthogonality of $B_{n,m}$ for the measure $d\sigma(t)/H_{n,m}(t)$ to find
\[      \frac{|A_{n,m}(x)+w(x)B_{n,m}(x)|}{|H_{n,m}(x)|} = \frac{1}{|B_{n,m}(x)|} \int_c^d \frac{B_{n,m}^2(t)}{x-t} \frac{d\sigma(t)}{|H_{n,m}(t)|}, \qquad x >d, \]
so that the external field on $[a,b]$ is (up to an additive constant)
\[   \lim_{n,m \to \infty} \frac{1}{n+m} \log |B_{n,m}(x)| = - q_1 U(x;\nu_2). \]
This gives the variational equation \eqref{eq:Uab} or \eqref{eq:Uab2}. 
For the case where $\textup{supp}(\nu_2) = [c^*,d]$, with $c < c^* <d$ one has the variational condition \eqref{eq:Uc*d} on $[c^*,d]$ which has to be supplemented with the inequality \eqref{eq:Ucc*} on $[c,c^*)$.
For details see \cite[Chapter 5, \S7]{NikiSor}.

\section{Normalizing the RHP}   \label{sec4}
The next transformation of the Riemann-Hilbert problem is to normalize it at infinity, but in such a way that we get nice jumps on the intervals.
This transformation will give the main term in the asymptotics outside the intervals.
For this we now introduce $g$-functions, which are the complex potentials of the measures $\nu_1$ and $\nu_2$:
\begin{equation}   \label{eq:g}
    g_1(x) = \int_a^b \log(x-t)\, d\nu_1(t), \quad g_2(x) = \int_c^d \log(x-t)\, d\nu_2(t),
\end{equation}
or, when $\textup{supp}(\nu_2)=[c^*,d]$,
\[     g_2(x) = \int_{c^*}^d \log(x-t)\, d\nu_2(t).   \]
For the logarithm we choose the branch cut on the negative real line.
Observe that for $x \in \mathbb{R}$
\begin{equation}   \label{eq:g1}
    g_1^\pm(x) = \begin{cases}
                    -U(x;\nu_1), & x > b, \\
                    -U(x;\nu_1) \pm i\pi, & x < a, \\
                    -U(x;\nu_1) \pm i\pi \varphi_1(x), & a \leq x \leq b,
                    \end{cases} \qquad
              \varphi_1(x) = \int_x^b d\nu_1(t) ,
\end{equation}
and similarly
\begin{equation}   \label{eq:g2}
    g_2^\pm(x) = \begin{cases}
                    -U(x;\nu_2), & x > d, \\
                    -U(x;\nu_2) \pm i\pi, & x < c \ (\textup{or } c^*), \\
                    -U(x;\nu_2) \pm i\pi \varphi_2(x), & c \ (\textup{or }c^*) \leq x \leq d,
                    \end{cases} \qquad
              \varphi_2(x) = \int_x^d d\nu_2(t) .
\end{equation}
We now introduce the second transformation
\begin{equation}  \label{eq:V}
      V(z) =  L  U(z)
               \begin{pmatrix}
                e^{(n+m)g_1(z)} & 0 & 0 \\ 0 & e^{-(n+m)g_1(z)+mg_2(z)} & 0 \\ 0 & 0 & e^{-mg_2(z)} \end{pmatrix}  L^{-1},
\end{equation}
where
\[    L = L(n,m) = \begin{pmatrix}  e^{-\frac{n+m}{3}(2\ell_1+\ell_2)} & 0 & 0 \\ 0 & e^{\frac{n+m}3(\ell_1-\ell_2)} & 0 \\
                 0 & 0 & e^{\frac{n+m}{3}(\ell_1+2\ell_2)}    \end{pmatrix}  . \]
This transformation indeed normalizes the behavior for $z \to \infty$:
\begin{equation}  \label{eq:V-infty}
    V(z) = \mathbb{I} + \O(1/z), \qquad z \to \infty ,
\end{equation}
which is a consequence of $g_i(z) = \log z + \O(1/z)$ for $i=1,2$ as $z \to \infty$.
The functions $e^{(n+m)g_1(z)}$ and $e^{mg_2(z)}$ have jumps on the intervals $(a,b)$ and $(c,d)$ or $(c^*,d)$ respectively, but are otherwise
analytic (the jumps over the branch cuts $(-\infty,a]$ and $(-\infty,c]$ or $(-\infty,c^*]$ disappear by taking the exponential),
hence $V$ is analytic on $\mathbb{C} \setminus ([a,b] \cup [c,d])$.
The price for normalizing the Riemann-Hilbert problem is that the jumps will be more complicated, but our choice of $g$-functions using the
equilibrium measures $(\nu_1,\nu_2)$ gives oscillatory jumps on the intervals.
Indeed, the jump over $(a,b)$
now is
\[   V_+(x) = V_-(x) \begin{pmatrix}
                      e^{(n+m)[g_1^+(x)-g_1^-(x)]} & 0 & 0 \\
                      -v_1(x) e^{(n+m)[g_1^+(x)+g_1^-(x)+\ell_1]-mg_2(x)} & e^{-(n+m)[g_1^+(x)-g_1^-(x)]} & 0 \\
                       0 & 0 & 1
                     \end{pmatrix} ,   \]
and over $(c,d)$ one has
\[   V_+(x) = V_-(x) \begin{pmatrix}
                      1 & 0 & 0 \\
                      0 & e^{m[g_2^+(x)-g_2^-(x)]} & 0 \\
                      0 & - v_2(x) e^{-(n+m)[g_1(x)-\ell_2] +m[g_2^+(x)+g_2^-(x)]} & e^{-m[g_2^+(x)-g_2^-(x)]}
                     \end{pmatrix}.  \]
Here we have used the functions
\[    v_1(x) = 2\pi i w_1(x) = 2\pi i(x-a)^\alpha(b-x)^\beta h_1(x), \quad  v_2(x) = 2\pi i w_2(x) = 2\pi i(x-c)^\gamma (d-x)^\delta h_2(x)  \]
to simplify the notation.
Recall from \eqref{eq:g1} that for $x \in (a,b)$
\begin{align*}    g_1^+(x)-g_1^-(x) &= 2\pi i \varphi_1(x),   \\
                  g_1^+(x)+g_2^-(x) &= -2U(x;\nu_1),
\end{align*}
and \eqref{eq:g2} gives for $x \in (c,d)$
\begin{align*}    g_2^+(x)-g_2^-(x) = 2\pi i \varphi_2(x),   \\
                  g_2^+(x)+g_2^-(x) = -2U(x;\nu_2),
\end{align*}
so that the jump for $V$ on $(a,b)$ is
\[   V_+(x) = V_-(x) \begin{pmatrix}
                      e^{2\pi i(n+m)\varphi_1(x)} & 0 & 0 \\
                      -v_1(x) e^{-2(n+m)U(x;\nu_1)+mU(x;\nu_2)+(n+m)\ell_1} & e^{-2\pi i(n+m) \varphi_1(x)} & 0 \\
                       0 & 0 & 1
                     \end{pmatrix} ,  \]
and on $(c,d)$ the jump is
\[   V_+(x) = V_-(x) \begin{pmatrix}
                      1 & 0 & 0 \\
                      0 & e^{2\pi im\varphi_2(x)} & 0 \\
                      0 & -v_2(x) e^{(n+m)U(x;\nu_1)-2mU(x;\nu_2)+(n+m)\ell_2} & e^{-2\pi im\varphi_2(x)}
                     \end{pmatrix}.  \]
If we take $q_1$ rational and $(n,m)$ so that
\begin{equation}
\label{rational}
 q_1=\frac{m}{n+m},
\end{equation}
and if $\textup{supp}(\nu_2)=[c,d]$ then the variational equations \eqref{eq:Uab}--\eqref{eq:Ucd} imply that the jumps
simplify to
\begin{equation}  \label{eq:Vab}
   V_+(x) = V_-(x) \begin{pmatrix}
                      e^{2\pi i(n+m)\varphi_1(x)} & 0 & 0 \\
                      -v_1(x)  & e^{-2\pi i(n+m) \varphi_1(x)} & 0 \\
                       0 & 0 & 1
                     \end{pmatrix} , \qquad x \in (a,b),
\end{equation}
and
\begin{equation}   \label{eq:Vcd}
   V_+(x) = V_-(x) \begin{pmatrix}
                      1 & 0 & 0 \\
                      0 & e^{2\pi im\varphi_2(x)} & 0 \\
                      0 & -v_2(x)  & e^{-2\pi im\varphi_2(x)}
                     \end{pmatrix}, \qquad x \in (c,d).
\end{equation}
If $\textup{supp}(\nu_2)=[c^*,d]$ with $c < c^* <d$, then \eqref{eq:Uc*d}--\eqref{eq:Ucc*} gives
\begin{equation}   \label{eq:Vc*d}
    V_+(x) = V_-(x) \begin{pmatrix}
                      1 & 0 & 0 \\
                      0 & e^{2\pi im\varphi_2(x)} & 0 \\
                      0 & -v_2(x)  & e^{-2\pi im\varphi_2(x)}
                     \end{pmatrix}, \qquad x \in (c^*,d).
\end{equation}
and
\begin{equation}   \label{eq:Vcc*}
V_+(x) = V_-(x) \begin{pmatrix}
                      1 & 0 & 0 \\
                      0 & 1 & 0 \\
                      0 & -v_2(x)e^{(n+m)\Phi(x)}  & 1
                     \end{pmatrix}, \qquad x \in [c,c^*),
\end{equation}
where $\Phi(x)=U(x;\nu_1)-2q_1U(x;\nu_2)+\ell_2 < 0$ on $[c,c^*)$.
Since $\varphi_1$ and $\varphi_2$ are real and positive functions, these jumps are oscillatory on the intervals $(a,b)$ and $(c,d)$ or $(c^*,d)$ respectively.
This is what we wanted to achieve, since it allows us to use the steepest descent method for oscillatory Riemann-Hilbert problems, which was
introduced by Deift and Zhou \cite{DeiftZhou1,DeiftZhou2}. Observe that the jumps \eqref{eq:Vab} and \eqref{eq:Vcd} are essentially reduced to
$2 \times 2$ jumps, which will make our work easier, because we can rely on some of the work done earlier (e.g., \cite{Deift}, \cite{KMVAV}).

\section{Opening the lenses}  \label{sec5}

A simple calculation shows that we can factorize the jump matrix in \eqref{eq:Vab} as
\[  \begin{pmatrix} (\Phi_1^+)^{n+m} & 0 & 0 \\ -v_1 & (\Phi_1^-)^{n+m} & 0 \\ 0 & 0 & 1 \end{pmatrix}
   = \begin{pmatrix}  1 & -(\Phi_1^+)^{n+m}/v_1 & 0 \\ 0 & 1 & 0 \\ 0 & 0 & 1 \end{pmatrix}
     \begin{pmatrix}  0 & 1/v_1 & 0 \\ -v_1 & 0 & 0 \\ 0 & 0 & 1 \end{pmatrix}
     \begin{pmatrix}  1 & -(\Phi_1^-)^{n+m}/v_1 & 0 \\ 0 & 1 & 0 \\ 0 & 0 & 1  \end{pmatrix}, \]
where we have used $\Phi_1^{\pm} = \exp(\pm 2\pi i \varphi_1)$. In a similar way, the jump matrix in \eqref{eq:Vcd} and \eqref{eq:Vc*d} factorizes as
\[  \begin{pmatrix} 1 & 0 & 0 \\ 0 & (\Phi_2^+)^m & 0 \\ 0 & -v_2 & (\Phi_2^-)^{m} \end{pmatrix}
   = \begin{pmatrix}  1 & 0 & 0 \\ 0 & 1 & -(\Phi_2^+)^m/v_2 \\ 0 & 0 & 1 \end{pmatrix}
     \begin{pmatrix}  1 & 0 & 0 \\ 0 & 0 & 1/v_2 \\ 0 & -v_2 & 0  \end{pmatrix}
     \begin{pmatrix}  1 & 0 & 0 \\ 0 & 1 & -(\Phi_2^-)^{m}/v_2  \\ 0 & 0 & 1  \end{pmatrix}, \]
where $\Phi_2^{\pm} = \exp(\pm 2\pi i\varphi_2)$. Therefore, instead of making one jump over $(a,b)$ with the jump matrix in \eqref{eq:Vab}
one can make three jumps over $(a,b)$, each with one of the matrices in the matrix factorization. The two outer matrices in the matrix
factorization contain oscillatory terms. By opening a lens with $[a,b]$ in the middle of the lens (see Figure \ref{fig:RHP-S}) one can make  jumps using
these outer matrices in the factorization over the lips of the lenses, but then one changes the Riemann-Hilbert problem inside the lens, where
only part of the jump in \eqref{eq:Vab} is done. The new Riemann-Hilbert matrix is
\[   S(z) = \begin{cases}
             V(z), & \textrm{outside the lens for $[a,b]$}, \\
             V(z)\begin{pmatrix} 1 & \Phi_1^{-(n+m)}/v_1 & 0 \\ 0 & 1 & 0  \\ 0 & 0 & 1 \end{pmatrix}, & \textrm{inside the lens, upper part}, \\
             V(z)\begin{pmatrix} 1 & -\Phi_1^{-(n+m)}/v_1 & 0 \\ 0 & 1 & 0 \\ 0 & 0 & 1 \end{pmatrix}, & \textrm{inside the lens, lower part}.
             \end{cases}  \]

\begin{figure}[ht]
\unitlength=2.2pt
\begin{picture}(200,50)(-20,30)
\thicklines
\put(20,50){\line(1,0){60}}
\put(100,50){\line(1,0){50}}
\put(20,50){\circle*{3}}
\put(18,43){$c$}
\put(80,50){\circle*{3}}
\put(80,43){$d$}
\put(100,50){\circle*{3}}
\put(98,43){$a$}
\put(150,50){\circle*{3}}
\put(150,43){$b$}
\put(100,50){\qbezier(0,0)(25,20)(50,0)}
\put(100,50){\qbezier(0,0)(25,-20)(50,0)}
\put(20,50){\qbezier(0,0)(30,20)(60,0)}
\put(20,50){\qbezier(0,0)(30,-20)(60,0)}
\put(125,50){\vector(1,0){3}}
\put(123,40){\vector(1,0){3}}
\put(123,60){\vector(1,0){3}}
\put(50,50){\vector(1,0){3}}
\put(49,40){\vector(1,0){1}}
\put(49,60){\vector(1,0){1}}
\put(135,60){$\Sigma_+^{a,b}$}
\put(135,36){$\Sigma_-^{a,b}$}
\put(30,60){$\Sigma_+^{c,d}$}
\put(30,36){$\Sigma_-^{c,d}$}
\end{picture}
\caption{Riemann-Hilbert problem for $S$}
\label{fig:RHP-S}
\end{figure}
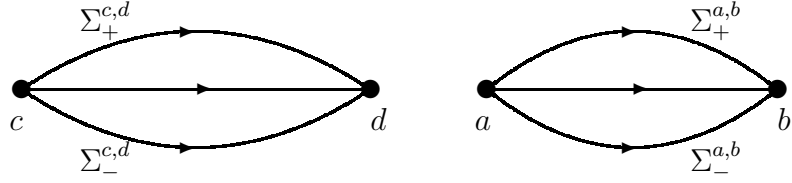
\bigskip

This can be done whenever the function $h_1$ (in the function $w_1$) is analytic in a region that contains the lens and if we can extend $\Phi_1$ analytically from $(a,b)$ to the region that contains the lens, in such a way that $\lim_{\epsilon \to 0+} \Phi_1(x\pm i\epsilon) = \Phi_1^\pm(x)$.
We also need to modify the Riemann-Hilbert problem near the lens for $[c,d]$: if $\textup{supp}(\nu_2)=[c,d]$ then
\[   S(z) = \begin{cases}
             V(z), & \textrm{outside the lens for $[c,d]$}, \\
             V(z)\begin{pmatrix} 1 & 0 & 0 \\ 0 & 1 & \Phi_2^{-m}/v_2  \\ 0 & 0 & 1 \end{pmatrix}, & \textrm{inside the lens, upper part}, \\
             V(z)\begin{pmatrix} 1 & 0 & 0 \\ 0 & 1 & -\Phi_2^{-m}/v_2 \\ 0 & 0 & 1 \end{pmatrix}, & \textrm{inside the lens, lower part},
             \end{cases}  \]
which can be done if $h_2$ (in the function $v_2$) is analytic in a region that contains the lens and if we can extend $\Phi_2$
analytically from $(c,d)$ to the region that contains the lens, in such a way that $\lim_{\epsilon \to 0+} \Phi_2(x\pm i\epsilon) = \Phi_2^\pm(x)$.
The jumps for the Riemann-Hilbert matrix over the six contours in Figure \ref{fig:RHP-S} are then given by
\[   S_+(z) =  \begin{cases}
    S_-(z) \begin{pmatrix}  1 & -\Phi_1^{-(n+m)}/v_1 & 0 \\ 0 & 1 & 0 \\ 0 & 0 & 1 \end{pmatrix}, & z \in \Sigma_+^{a,b} \cup \Sigma_-^{a,b}, \\
    S_-(z) \begin{pmatrix}  0 & 1/v_1 & 0 \\ -v_1 & 0 & 0 \\ 0 & 0 & 1  \end{pmatrix}, & z \in (a,b),
               \end{cases}  \]
and
\[   S_+(z) =  \begin{cases}
    S_-(z) \begin{pmatrix}  1 & 0 & 0 \\ 0 & 1 & -\Phi_2^{-m}/v_2  \\ 0 & 0 & 1 \end{pmatrix}, & z \in \Sigma_+^{c,d} \cup \Sigma_-^{c,d}, \\
    S_-(z) \begin{pmatrix}  1 & 0 & 0 \\ 0 & 0 & 1/v_2  \\ 0 & -v_2 & 0  \end{pmatrix}, & z \in (c,d).
               \end{cases}  \]
If $\textup{supp}(\nu_2)=[c^*,d]$ with $c < c^*<d$, then we open the lens only over $(c^*,d)$, see Figure \ref{fig:RHP-S*}, and
\[   S_+(z) =  \begin{cases}
    S_-(z) \begin{pmatrix}  1 & 0 & 0 \\ 0 & 1 & -\Phi_2^{-m}/v_2  \\ 0 & 0 & 1 \end{pmatrix}, & z \in \Sigma_+^{c^*,d} \cup \Sigma_-^{c^*,d}, \\
    S_-(z) \begin{pmatrix}  1 & 0 & 0 \\ 0 & 0 & 1/v_2  \\ 0 & -v_2 & 0  \end{pmatrix}, & z \in (c^*,d), \\
    S_-(z) \begin{pmatrix}  1 & 0 & 0 \\ 0 & 1 & 0 \\ 0 & -v_2 e^{(n+m)\Phi} & 1 \end{pmatrix}, & z \in [c,c^*).
               \end{cases}  \]

\begin{figure}[ht]
\unitlength=2.2pt
\begin{picture}(200,50)(-20,30)
\thicklines
\put(20,50){\line(1,0){60}}
\put(100,50){\line(1,0){50}}
\put(20,50){\circle*{3}}
\put(18,43){$c$}
\put(30,50){\circle*{3}}
\put(28,43){$c^*$}
\put(80,50){\circle*{3}}
\put(80,43){$d$}
\put(100,50){\circle*{3}}
\put(98,43){$a$}
\put(150,50){\circle*{3}}
\put(150,43){$b$}
\put(100,50){\qbezier(0,0)(25,20)(50,0)}
\put(100,50){\qbezier(0,0)(25,-20)(50,0)}
\put(30,50){\qbezier(0,0)(25,20)(50,0)}
\put(30,50){\qbezier(0,0)(25,-20)(50,0)}
\put(125,50){\vector(1,0){3}}
\put(123,40){\vector(1,0){3}}
\put(123,60){\vector(1,0){3}}
\put(26,50){\vector(1,0){1}}
\put(55,50){\vector(1,0){3}}
\put(56,40){\vector(1,0){1}}
\put(56,60){\vector(1,0){1}}
\put(135,60){$\Sigma_+^{a,b}$}
\put(135,36){$\Sigma_-^{a,b}$}
\put(35,60){$\Sigma_+^{c^*,d}$}
\put(35,36){$\Sigma_-^{c^*,d}$}
\end{picture}
\caption{Riemann-Hilbert problem for $S$ when $\textup{supp}(\nu_2)=[c^*,d]$}
\label{fig:RHP-S*}
\end{figure}
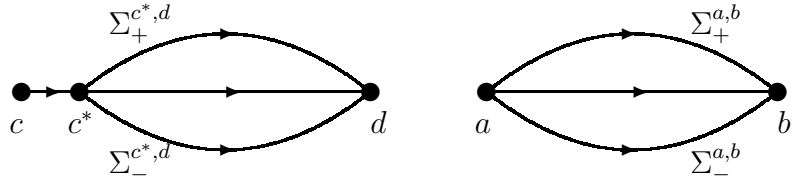
\bigskip

We already know that $\Phi < 0$ on $[c,c^*)$. 
We claim that $|\Phi_1|>1$ on the lips  $\Sigma_\pm^{a,b}$ of the lens for $[a,b]$ (except at $a$ and $b$) and that $|\Phi_2| >1$ on the lips
$\Sigma_\pm^{c,d}$ or $\Sigma_\pm^{c^*,d}$ of the lens for $[c,d]$ or $[c^*,d]$ (except at the endpoints), provided that these lenses are thin enough.
The function $\Phi_1^+$, which is defined on $[a,b]$ by $\Phi_1^+(x) = \exp(2\pi i \varphi_1(x))$, with $\varphi_1$ given by \eqref{eq:g1},
can be extended to the function $\Phi_1(z) = \exp( 2\pi i \varphi_1(z))$ with $\Im z >0$, where $\varphi_1$ is given by
\[    \varphi_1(z) = \int_z^b \nu_1'(t)\, dt,  \]
where $\nu_1'(t) = m_1(t)(t-a)^{-1/2}(b-t)^{-1/2}$ is the density of the measure $\nu_1$, which is absolutely continuous with square root singularities at the endpoints, and with $m_1$ a positive analytic function on $[a,b]$. Write $\varphi_1(z) = u(x,y) + i v(x,y)$, with $z=x+iy$, then on the interval $[a,b]$
we have $u(x,0)= \int_x^b \nu_1'(t)\, dt$ and $v(x,0) = 0$, so that $\frac{\partial u}{\partial x} = -\nu_1'(x) < 0$ for $x \in (a,b)$. The Cauchy-Riemann
equations then imply that $\frac{\partial v}{\partial y} <0$, so that $v(x,y)$ is a decreasing function of $y$, and hence $v(x,y) <0$ for $x \in (a,b)$
when $y >0$ and close to $0$. Then $|\Phi_1(z)| = \exp(- 2\pi v(x,y)) > 0$ for $y>0$ and close to zero and hence $|\Phi_1| >1$ on $\Sigma_+^{a,b}$,
away from $a$ and $b$.
In a similar way we have that $\Phi_1^-(x) = \exp(-2\pi i \varphi_1(x))$ on $[a,b]$, so in the lower complex plane $|\Phi_1(z)| = \exp(2\pi v(x,y))$, with
$y < 0$ and close to zero. Since $v(x,y)$ is a decreasing function of $y$ and $v(x,0)=0$, we have that $v(x,y) >0$ for $y<0$ and small enough,
meaning that $|\Phi_1| > 1$ on $\Sigma_-^{a,b}$, away from the points $a$ and $b$.
The same reasoning can be given to show that $|\Phi_2| > 1$ on the lips $\Sigma_{\pm}^{c,d}$ or $\Sigma_{\pm}^{c^*,d}$ away from the points $c$ or $c^*$ and $d$. 

\section{The global parametrix}  \label{sec6}

The global parametrix is the solution of the Riemann-Hilbert problem for $S$ if we ignore the jumps on the lips of the lenses, which for $n,m \to \infty$
converge to the identity matrix. So we look for a $3\times 3$ matrix $N$ which is analytic in $\mathbb{C} \setminus ([a,b] \cup [c,d])$ with jumps
\[    N_+(x) = N_-(x) \begin{pmatrix} 0 & 1/v_1 & 0 \\ -v_1 & 0 & 0 \\ 0 & 0 & 1  \end{pmatrix}, \qquad x \in (a,b), \]
\[    N_+(x) = N_-(x) \begin{pmatrix} 1 & 0 & 0 \\ 0 & 0 & 1/v_2 \\ 0 & -v_2 & 0  \end{pmatrix}, \qquad x \in (c,d). \]
The case $\textup{supp}(\nu_2)=[c^*,d]$ is similar and one only needs to change $c$ to $c^*$.

\begin{figure}[ht]
\begin{picture}(200,210)(0,250)
\unitlength=3.5pt
\put(0,100){\line(1,0){100}}
\put(0,100){\line(2,1){20}}
\put(20,110){\line(1,0){100}}
\put(100,100){\line(2,1){20}}
\put(0,80){\line(1,0){100}}
\put(0,80){\line(2,1){20}}
\put(20,90){\line(1,0){100}}
\put(100,80){\line(2,1){20}}
\put(0,120){\line(1,0){100}}
\put(0,120){\line(2,1){20}}
\put(20,130){\line(1,0){100}}
\put(100,120){\line(2,1){20}}
\multiput(60,105)(0,2){10}{\line(0,1){1}}
\multiput(90,105)(0,2){10}{\line(0,1){1}}
\multiput(20,105)(0,-2){10}{\line(0,-1){1}}
\multiput(55,105)(0,-2){10}{\line(0,-1){1}}
\thicklines
\put(60,105){\line(1,0){30}}
\put(60,125){\line(1,0){30}}
\put(20,105){\line(1,0){35}}
\put(20,85){\line(1,0){35}}
\put(60,105){\circle*{1}}
\put(90,105){\circle*{1}}
\put(20,105){\circle*{1}}
\put(55,105){\circle*{1}}
\put(60,125){\circle*{1}}
\put(90,125){\circle*{1}}
\put(20,85){\circle*{1}}
\put(55,85){\circle*{1}}
\put(100,125){$\mathfrak{R}_0$}
\put(100,105){$\mathfrak{R}_1$}
\put(100,85){$\mathfrak{R}_2$}
\put(60,126.5){$a$}
\put(90,126.5){$b$}
\put(20,82){$c$}
\put(55,82){$d$}
\end{picture}
\caption{The Riemann surface $\mathfrak{R}$}
\label{fig:R}
\end{figure}
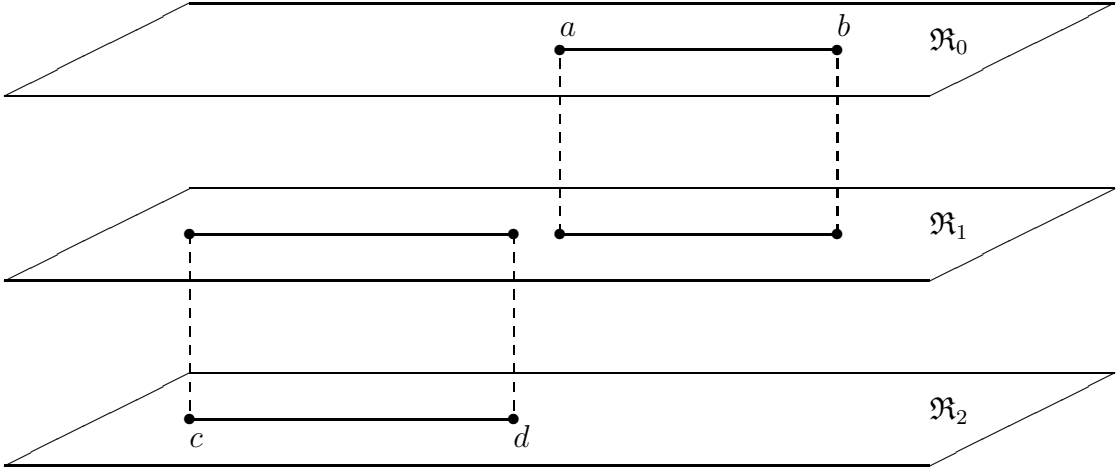

We solve this in two steps, as was done, e.g., in \cite{BranFidFM} or \cite{KlaasArno} for an Angelesco system. First, we need the Szeg\H{o} functions for $(v_1,v_2)$ for the geometry of the Riemann surface $\mathfrak{R}$
with three sheets $\mathfrak{R}_0,\mathfrak{R}_1,\mathfrak{R}_2$,
which is of genus 0 and has branch points $a,b,c,d$. The interval $[a,b]$ on the first sheet $\mathfrak{R}_0$ is connected in the usual
way to the interval $[a,b]$ on $\mathfrak{R_1}$, and $[c,d]$ on the third sheet $\mathfrak{R}_2$ is connected to $[c,d]$ on the second sheet
$\mathfrak{R}_1$ (see Figure \ref{fig:R}).
These Szeg\H{o} functions are analytic functions $D_0,D_1,D_2$ on $\overline{\mathbb{C}} \setminus ([a,b] \cup [c,d])$ which satisfy the boundary conditions
\begin{equation} \label{eq:6.1}
      \begin{cases}
         D_1^+(x) = v_1(x) D_0^-(x), \\
         D_1^-(x) = v_1(x) D_0^+(x), \\
         D_2^+(x) = D_2^-(x),
       \end{cases}   \qquad x \in [a,b],
\end{equation}
and
\begin{equation} \label{eq:6.2}
   \begin{cases}
         D_0^+(x) = D_0^-(x), \\
         D_2^+(x) = v_2(x) D_1^-(x) , \\
         D_2^-(x) = v_2(x) D_1^+(x),
      \end{cases} \qquad x \in [c,d],
\end{equation}
and for which the limit for $z \to \infty$ does not vanish: $D_0(\infty) \neq 0$, $D_1(\infty) \neq 0,$ and  $D_2(\infty) \neq 0$.
\bigskip

Then, with these functions we can define the matrix
\begin{equation}   \label{eq:N}
   N_0(z) = \begin{pmatrix} D_0(\infty) & 0 & 0 \\ 0 & D_1(\infty) & 0 \\ 0 & 0 & D_2(\infty) \end{pmatrix}^{-1}
              N(z) \begin{pmatrix}  D_0(z) & 0 & 0 \\ 0 & D_1(z) & 0 \\ 0 & 0 & D_2(z) \end{pmatrix},
\end{equation}
and this has the same behavior as $N$ when $z\to \infty$, but has a simpler jump on the intervals
\begin{equation}
\label{jump1}
     N_0^+(x) = N_0^-(x) \begin{pmatrix} 0 & 1 & 0 \\ -1 & 0 & 0 \\ 0 & 0 & 1  \end{pmatrix}, \qquad x \in (a,b),
\end{equation}
\begin{equation}
\label{jump2} N_0^+(x) = N_0^-(x) \begin{pmatrix} 1 & 0 & 0 \\ 0 & 0 & 1 \\ 0 & -1 & 0  \end{pmatrix}, \qquad x \in (c,d).
\end{equation}
\bigskip
The second step consists in finding an expression for $N_0$.

For both steps, we need a conformal mapping $\psi: \mathfrak{R} \to \overline{\mathbb{C}}$, $\psi_j(x)=y$ $(j=0,1,2)$ between the 
Riemann surface $\mathfrak{R}$ (see Figure \ref{fig:R}) and the extended complex plane $\overline{\mathbb{C}}$ (see Figure \ref{fig:Rimage}).
\begin{figure}[ht]
\unitlength=0.7mm
\centering
\begin{picture}(220,130)(0,-20)
\put(0,50){\line(1,0){220}}
\put(10,51){\line(0,-1){2}}
\put(60,51){\line(0,-1){2}}
\put(160,51){\line(0,-1){2}}
\put(210,51){\line(0,-1){2}}
\put(9,44){$-2$}
\put(59,44){$-1$}
\put(112,44){$0$}
\put(160,45){$1$}
\put(211,44){$2$}
\put(110,110){\line(0,-1){120}}
\put(109,100){\line(1,0){2}}
\put(109,75){\line(1,0){2}}
\put(109,25){\line(1,0){2}}
\put(109,0){\line(1,0){2}}
\put(112,100){$1$}
\put(112,75){$0.5$}
\put(112,25){$-0.5$}
\put(112,0){$-1$}
\put(60,50){\qbezier(23.63,0)(23.63,9.79)(16.71,16.71)}
\put(60,50){\qbezier(0,23.63)(9.79,23.63)(16.71,16.71)}
\put(60,50){\qbezier(0,23.63)(-9.79,23.63)(-16.71,16.71)}
\put(60,50){\qbezier(-23.63,0)(-23.63,9.79)(-16.71,16.71)}
\put(60,50){\qbezier(23.63,0)(23.63,-9.79)(16.71,-16.71)}
\put(60,50){\qbezier(0,-23.63)(9.79,-23.63)(16.71,-16.71)}
\put(60,50){\qbezier(0,-23.63)(-9.79,-23.63)(-16.71,-16.71)}
\put(60,50){\qbezier(-23.63,0)(-23.63,-9.79)(-16.71,-16.71)}
\put(130,90){$\psi(\mathfrak{R}_1)$}
\put(50,60){$\psi(\mathfrak{R}_2)$}
\put(170,60){$\psi(\mathfrak{R}_0)$}
\put(170,60){\vector(-1,-1){8}}
\put(160,50){\qbezier(6.26,0)(6.26,2.59)(4.43,4.43)}
\put(160,50){\qbezier(0,6.26)(2.59,6.26)(4.43,4.43)}
\put(160,50){\qbezier(0,6.26)(-2.59,6.26)(-4.43,4.43)}
\put(160,50){\qbezier(-6.26,0)(-6.26,2.59)(-4.43,4.43)}
\put(160,50){\qbezier(6.26,0)(6.26,-2.59)(4.43,-4.43)}
\put(160,50){\qbezier(0,-6.26)(2.59,-6.26)(4.43,-4.43)}
\put(160,50){\qbezier(0,-6.26)(-2.59,-6.26)(-4.43,-4.43)}
\put(160,50){\qbezier(-6.26,0)(-6.26,-2.59)(-4.43,-4.43)}
\thicklines
\put(61,73.63){\vector(1,0){0}}
\put(162,56.26){\vector(1,0){0}}
\put(44,74){$\Gamma_2^+$}
\put(44,24){$\Gamma_2^-$}
\put(150,57){$\Gamma_1^-$}
\put(150,40){$\Gamma_1^+$}
\end{picture}
\caption{The image of $\mathfrak{R}$ through the mapping $\psi$ for the intervals $[-5,-1]$ and $[1,2]$}
\label{fig:Rimage}
\end{figure}
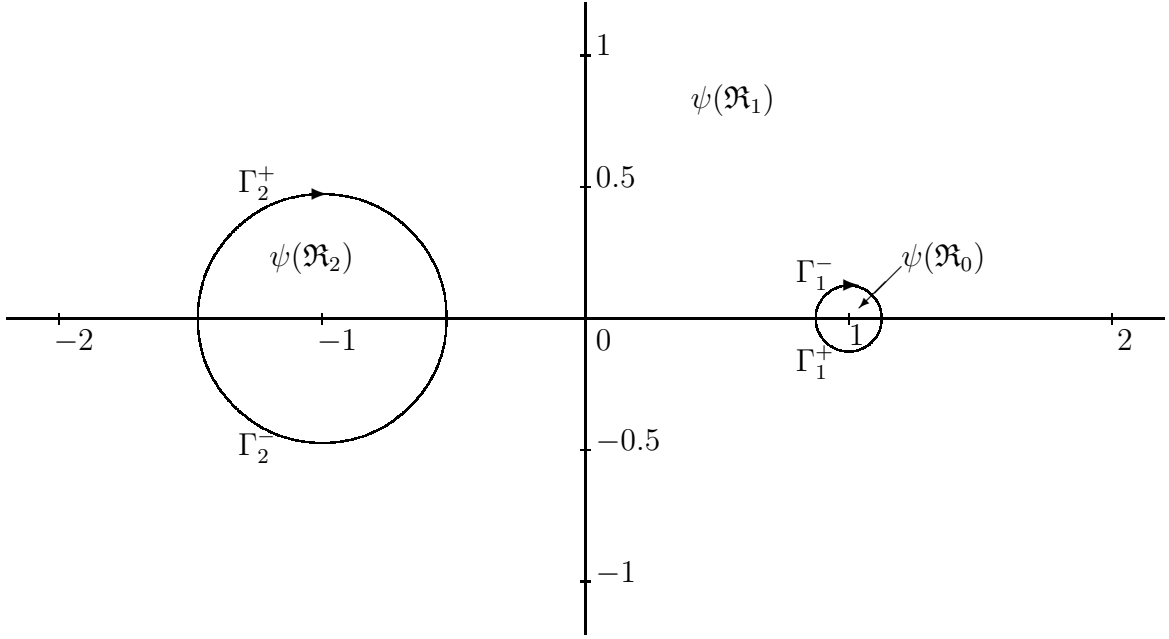

A way to obtain such a mapping was described in \cite{LPRY}. Using an affine transformation, if necessary, without loss of generality we can assume that $[a,b] = [1,\lambda]$ and $[c,d] = [-\mu,-1], \lambda, \mu > 0$. In \cite[Theorem 3.1]{LPRY} it was proved that the rational function $H(y)$ below establishes a one to one correspondence between $\overline{\mathbb{C}}$ and $\mathfrak{R}$. Here,
\begin{equation}
\label{rational}
H(y) = x = h+y+ \frac{Ay}{1-y} + \frac{By}{1+y}
\end{equation}
with constants $A,B$ and $h$ given by
\begin{align*}
     A &= \frac14 (1-\hat\beta)(1-\hat\alpha)(1-\hat{a})(1-\hat{b}), \\
     B &= \frac14 (1+\hat\beta)(1+\hat\alpha)(1+\hat{a})(1+\hat{b}), \\
     h &= \frac14 (\hat{a}+\alpha) \left( 2\hat{a}\alpha - \frac{(\hat{a}-\alpha)^2}{1-\hat{a}\alpha} \right),
\end{align*}
where $\hat{\beta}, \hat{\alpha}, \hat{a}, \hat{b}\,\, (\hat\beta < -1 < \hat\alpha < \hat{a} < 1 < \hat{b})$ are the critical points of $H$,
which are univocally determined as solutions of some algebraic equations depending solely on $\mu$ and $\lambda$.
Specifically, $\hat\beta$ and $\hat{b}$ are the solutions of the quadratic equation
\[ x^2 + (\hat{a} + \hat\alpha) x + \frac{(\hat{a}-\hat\alpha)^2}{1- \hat{a}\hat\alpha} - 3 = 0,\]
whereas $\hat\alpha$ and $\hat{a}$ are the unique solutions of the algebraic system
\begin{align*}
 (\mu - \lambda)(\hat{a}-\hat\alpha)^3 & = 2(\hat{a} +\hat\alpha)[(9-\hat{a}\hat\alpha)(1-\hat{a}\hat\alpha) - (\hat{a}-\hat\alpha)^2]   \\
 (\mu + \lambda)^2(\hat{a}-\hat\alpha)^6 & =4 (3+\hat{a}\hat\alpha)^2(1-\hat{a}\hat\alpha)[((\hat{a}+\hat\alpha)^2 +12)(1-\hat{a}\hat\alpha) - 4(\hat{a}-\hat\alpha)^2].
\end{align*}
Note that we used the notation $\hat{a}, \hat\alpha, \hat{b}$ and $\hat{\beta}$ because $a$ and $b$ are already used for the endpoints of the interval $[a,b]$ and $\alpha, \beta$ for the exponents in $w_1$ (see \eqref{eq:mu1}).
The $\hat{a}, \hat{b},\hat\alpha, \hat\beta,$ correspond to $a,b,\alpha,\beta$ in \cite{LPRY}, respectively. (When $\mu = \lambda$, that is if the intervals $[a,b], [c,d]$ have equal length, these equations reduce substantially and can be solved exactly with radicals, see \cite{LPRY}).

In other words, we can take $\psi$ as the solution of the cubic equation
\[    y^3-(x+A-B-h)y^2 - (1+A+B)y +x-h = 0, \]
Let   $\psi_0,\psi_1,\psi_2$ be the branches of $\psi$ corresponding to $\mathfrak{R}_0,\mathfrak{R}_1, \mathfrak{R}_2$, respectively. Denote
$\widetilde{\mathfrak{R}}_j = \psi(\mathfrak{R}_j), j=0,1,2$ (see Figure 3). We have $\psi(\infty^{(0)}) = \psi_0(\infty) = 1, \psi(\infty^{(1)}) = \psi_1(\infty) =\infty, \psi(\infty^{(2)}) = \psi_2(\infty) = -1$.
Let us orient the closed curves $\partial \widetilde{\mathfrak{R}}_0$ and $\partial \widetilde{\mathfrak{R}}_2$ so that the left ($+$) sides induced by the orientation coincides with the regions $\widetilde{\mathfrak{R}}_0$ and $\widetilde{\mathfrak{R}}_2$.

Let us find $D = \mbox{diag}(D_0,D_1,D_2)$ holomorphic in $\overline{\mathbb{C}} \setminus ([-\mu,-1] \cup [1,\lambda])$ verifying \eqref{eq:6.1}-\eqref{eq:6.2}. We seek the functions $D_j$ in the form $D_j(y) = \widehat{D}(\psi(x))$, with $\psi_j(x)=y$, $j=0,1,2$, 
for some function $\widehat{D}$ verifying:
\begin{itemize}
\item[i)] $\widehat{D} \in H(\overline{\mathbb{C}} \setminus(\partial\widetilde{\mathfrak{R}}_0 \cup \partial\widetilde{\mathfrak{R}}_2))$ and nowhere zero.
\item[ii)]$\widehat{D}_-(x) = v_1(H(x)) \widehat{D}_+(x), x \in \partial \widetilde{\mathfrak{R}}_0.$
\item[iii)] $\widehat{D}_+(x) = v_2(H(x)) \widehat{D}_-(x), x \in \partial \widetilde{\mathfrak{R}}_2.$
\end{itemize}
where $H(y)$ is defined in \eqref{rational}. This is consistent with the orientation and \eqref{eq:6.1}-\eqref{eq:6.2}.

Applying a branch of the logarithm to $\mbox{\rm ii)-iii)}$ we see that finding $\widehat{D}$ reduces to solving a scalar additive Riemann-Hilbert problem with boundary conditions
\begin{itemize}
\item $\log \widehat{D}_-(x) = \log v_1(H(x)) + \log \widehat{D}_+(x), \quad x \in \partial \widetilde{\mathfrak{R}}_0,$
\item $\log \widehat{D}_+(x) = \log v_2(H(x)) + \log \widehat{D}_-(x), \quad x \in \partial \widetilde{\mathfrak{R}}_2.$
\end{itemize}
Using the Sokhotsky-Plemelj formula, we find that
\[\widehat{D}(w) = C \exp \left(\frac{1}{2\pi i}\int_{\partial \widetilde{\mathfrak{R}}_2}  \frac{\log v_2(H(x))}{x - w} dx- \frac{1}{2\pi i}\int_{\partial \widetilde{\mathfrak{R}}_0}\frac{\log v_1(H(x))}{x - w} dx\right),\]
where $C$ is an arbitrary constant and the integration is performed according to the orientation selected.
Hence, defining for $j=0,1,2$
\begin{equation}   \label{eq:D}
 {D}_j(z) = C \exp \left(\frac{1}{2\pi i}\int_{\partial \widetilde{\mathfrak{R}}_2}  \frac{\log v_2(H(x))}{x - \psi_j(z)} dx- \frac{1}{2\pi i}\int_{\partial \widetilde{\mathfrak{R}}_0}\frac{\log v_1(H(x))}{x - \psi_j(z)} dx\right),
\end{equation}
we obtain a solution of \eqref{eq:6.1}--\eqref{eq:6.2}. Moreover, notice that $D_0D_1D_2$ can be extended to an 
entire function on $\overline{\mathbb{C}}$. Indeed, the boundary conditions imply that it is analytic except possibly at $\{-\mu,-1,1,\lambda\}$. At these points the conditions on $v_j, j=1,2$ imply that there it behaves like $\mathcal{O}(1)$. It is also bounded at infinity; therefore, it is constant. We can take $C$ so that $D_0D_1D_2\equiv 1$.

Now we find $N_0$ in the form
\begin{equation}
\label{eq:N0}
N_0(z) =
\left(
\begin{array}{ccc}
N_{1}(\psi_0(z)) & N_{1}(\psi_1(z)) & N_{1}(\psi_2(z)) \\
N_{2}(\psi_0(z)) & N_{2}(\psi_1(z)) & N_{2}(\psi_2(z)) \\
N_{2}(\psi_0(z)) & N_{3}(\psi_1(z)) & N_{3}(\psi_2(z)) \\
\end{array}
\right) ,
\end{equation}
where $N_1,N_2,N_3$ are appropriate algebraic functions defined on $\overline{\mathbb{C}}$. Set
\[\Gamma_1^+ = \psi_{0,+}([1,\lambda]), \quad \Gamma_1^- = \psi_{0,-}([1,\lambda]), \quad \Gamma_2^+ = \psi_{1,+}([-\mu,-1]), \quad \Gamma_2^- = \psi_{1,-}([-\mu,-1]). \]
We have $\partial \widetilde{\mathfrak{R}}_0  = \Gamma_1^+ \cup \Gamma_1^-$ and $\partial \widetilde{\mathfrak{R}}_2  = \Gamma_2^+ \cup \Gamma_2^-$. It is not difficult to verify that $\Gamma_1^-, \Gamma_2^+$ lie in the upper half plane whereas $\Gamma_1^+, \Gamma_2^-$ in the lower half plane,
see Figure \ref{fig:Rimage}.
The boundary conditions \eqref{jump1}-\eqref{jump2} imply that
\[N_{j,+}(x) = - N_{j,-}(x), \,\,\, x\in \Gamma_1^+ \cup \Gamma_2^+,  \quad N_{j,+}(x) = N_{j,-}(x), \,\,\, x \in \Gamma_1^- \cup \Gamma_2^-,  \quad j=1,2,3.\]
Recall that $\hat\beta,\hat\alpha,\hat{a},\hat{b}$ are the critical points of $H$ which has simple poles at $-1,1,\infty$. The condition which the matrix function $N_0$ should verify at infinity reduces to requiring that $N_j(\tau_i) = \delta_{i,j}$ where $\tau_i$ stands for $-1,1,$ or $\infty$.
Define
\[r(z) = \sqrt{(z-\hat\beta)(z-\hat\alpha)(z-\hat{a})(z-\hat{b})}\]
with a branch cut along $\Gamma_1^+ \cup \Gamma_2^+$ which behaves as $z^2 + \mathcal{O}(z)$ as $z\to \infty$. Define
\begin{equation}  \label{eq:NN}
 N_1(z) = r_1\frac{z+1}{r(z)}, \qquad  N_2(z) = \frac{z^2-1}{r(z)}, \qquad N_3(z) = r_3\frac{z-1}{r(z)},
\end{equation}
where $r_1,r_3$ are constants selected so that $N_1(1) = 1 = N_3(-1)$. Taking $N_0$ as in \eqref{eq:N0} relations \eqref{jump1}-\eqref{jump2} can be verified directly. Moreover, $N_0(z) =  \mathbb{I} + \mathcal{O}(1/z), z \to \infty$. From \eqref{jump1}-\eqref{jump2}, it follows that $\det N_0(z)$ is analytic in ${\mathbb{C}} \setminus \{-\mu,-1,1,\lambda\}$. The behavior of $\det N_0(z)$ in a neighborhood of any one of theses extreme points, say $\zeta$, is at worst like $\mathcal{O}(|z-\zeta|^{-1/2}), z \to \zeta$, so these singularities are removable and $\det N_0(z)$ is an entire function and because of its behavior at $\infty$ it is constantly equal to $1$.

\section{Parametrices around the endpoints}  \label{sec7}

Unfortunately, the jumps for $S$ on the lips $\Sigma_\pm^{a,b}$ and $\Sigma_\pm^{c,d}$ or $\Sigma_\pm^{c^*,d}$
do not tend uniformly to the identity matrix.
The uniformity is violated near the endpoints $a,b,c$ or $c^*,d$ of the intervals. We need to make a local analysis near each of these endpoints and
construct a parametrix that describes the local behavior near such a point, and which matches the global parametrix outside a neighborhood.
We will do this only for the point $b$, but the analysis is similar for the other three points.

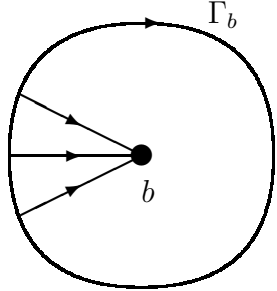
\begin{figure}[ht]
\unitlength=2.5pt
\begin{picture}(150,50)(10,25)
\thicklines
\put(100,50){\circle*{3}}
\put(100,70){\qbezier(0,0)(20,0)(20,-20)}
\put(120,50){\qbezier(0,0)(0,-20)(-20,-20)}
\put(100,30){\qbezier(0,0)(-20,0)(-20,20)}
\put(80,50){\qbezier(0,0)(0,20)(20,20)}
\put(100,50){\line(-1,0){20}}
\put(100,50){\line(-2,1){18.5}}
\put(100,50){\line(-2,-1){18.5}}
\put(100,43){$b$}
\put(88,50){\vector(1,0){3}}
\put(100,70){\vector(1,0){3}}
\put(88,56){\vector(2,-1){3}}
\put(88,44){\vector(2,1){3}}
\put(110,70){$\Gamma_b$}
\end{picture}
\caption{Parametrix around $b$}
\label{fig:par0}
\end{figure}

The idea is to approximate the Riemann-Hilbert problem for $S$ inside a curve $\Gamma_b$ around $b$ (see Figure \ref{fig:par0}) by a model
Riemann-Hilbert problem for a matrix $P_b$ which matches the global parametrix $N$ on $\Gamma_b$ with an error $\O(1/n)$. Such a local parametrix was
constructed earlier for orthogonal polynomials on $[-1,1]$ with Jacobi type weights in \cite{KMVAV}, and this is for a $2\times 2$ Riemann-Hilbert problem.
Observe that all the jumps for $S$ inside $\Gamma_b$ are of the form
\[  \begin{pmatrix} J & 0 \\ 0 & 1 \end{pmatrix}, \]
where $J$ is a $2\times 2$ matrix and $0$ is a row/column vector containing zeros, so basically the Riemann-Hilbert problem for $S$ near $b$ behaves like a $2\times 2$ Riemann-Hilbert problem,
so that we can use the construction from \cite{KMVAV} with some modifications.

The $2 \times 2$ matrix $\Psi$ that was considered in \cite[\S 6, p. 365]{KMVAV} solves the following Riemann-Hilbert
problem on the system of contours $\Sigma_\Psi = \gamma_1 \cup \gamma_2 \cup \gamma_3$, with
\[   \gamma_1 = \{ re^{2\pi i/3}: r > 0 \}, \quad \gamma_2 = (-\infty,0], \quad \gamma_3 = \{re^{-2\pi i/3}: r >0 \}, \]
with the orientation toward the point $0$:
\begin{itemize}
  \item $\Psi$ is analytic in $\mathbb{C} \setminus \Sigma_\Psi$,
  \item $\Psi$ satisfies the jump conditions
   \[\Psi_+(\zeta) = \Psi_-(\zeta) \begin{pmatrix} 1 & 0 \\ e^{\beta\pi i} & 1 \end{pmatrix}, \qquad \zeta \in \gamma_1, \]
   \[\Psi_+(\zeta) = \Psi_-(\zeta) \begin{pmatrix} 0 & 1 \\ -1 & 0 \end{pmatrix}, \qquad \zeta \in \gamma_2, \]
   \[\Psi_+(\zeta) = \Psi_-(\zeta) \begin{pmatrix} 1 & 0 \\ e^{-\beta\pi i} & 1 \end{pmatrix}, \qquad \zeta \in \gamma_3, \]
  \item for $\beta < 0$
          \[  \Psi(\zeta) = \O \begin{pmatrix}  |\zeta|^{\beta/2} &  |\zeta|^{\beta/2} \\
                                         |\zeta|^{\beta/2} & |\zeta|^{\beta/2} \end{pmatrix}, \qquad \zeta \to 0, \]
       for $\beta = 0$
          \[  \Psi(\zeta) = \O \begin{pmatrix} \log |\zeta| &  \log |\zeta| \\
                                         \log |\zeta| & \log |\zeta| \end{pmatrix}, \qquad \zeta \to 0, \]
       and for $\beta >0$
          \[  \Psi(\zeta) = \O \begin{pmatrix}  |\zeta|^{\beta/2} &  |\zeta|^{-\beta/2} \\
                            |\zeta|^{\beta/2} & |\zeta|^{-\beta/2} \end{pmatrix}, \qquad \zeta \to 0 \textrm{ and } |\arg \zeta| < \frac{2\pi}{3}, \]
          \[  \Psi(\zeta) = \O \begin{pmatrix}  |\zeta|^{-\beta/2} &  |\zeta|^{-\beta/2} \\
                            |\zeta|^{-\beta/2} & |\zeta|^{-\beta/2} \end{pmatrix}, \qquad \zeta \to 0 \textrm{ and } \frac{2\pi}{3} < |\arg \zeta| <\pi. \]
\end{itemize}
The matrix $\Psi$ is explicitly given by \cite[Thm. 6.3]{KMVAV}
\[  \Psi(\zeta) = \begin{pmatrix}  I_\beta(2\zeta^{1/2}) & \frac{i}{\pi} K_\beta(2\zeta^{1/2}) \\
                                   2\pi i \zeta^{1/2} I_\beta'(2\zeta^{1/2}) & -2\zeta^{1/2} K_\beta'(2\zeta^{1/2})
                  \end{pmatrix}, \qquad |\arg \zeta| < \frac{2\pi}{3}, \]
\begin{multline*}
\Psi(\zeta) = \begin{pmatrix}  \frac12 H_\beta^{(1)}(2(-\zeta)^{1/2}) & \frac12 H_\beta^{(2)}(2(-\zeta)^{1/2}) \\
                                   \pi \zeta^{1/2} (H_\beta^{(1)})'(2(-\zeta)^{1/2}) & \pi\zeta^{1/2} (H_\beta^{(2)})'(2(-\zeta)^{1/2})
                  \end{pmatrix}  \begin{pmatrix} e^{\beta\pi i/2} & 0 \\ 0 & e^{-\beta\pi i/2} \end{pmatrix}, \\
\qquad \frac{2\pi}{3} < \arg \zeta < \pi,
\end{multline*}
\begin{multline*}
\Psi(\zeta) = \begin{pmatrix}  \frac12 H_\beta^{(2)}(2(-\zeta)^{1/2}) & -\frac12 H_\beta^{(1)}(2(-\zeta)^{1/2}) \\
                                   -\pi \zeta^{1/2} (H_\beta^{(2)})'(2(-\zeta)^{1/2}) & \pi\zeta^{1/2} (H_\beta^{(1)})'(2(-\zeta)^{1/2})
                  \end{pmatrix}  \begin{pmatrix} e^{-\beta\pi i/2} & 0 \\ 0 & e^{\beta\pi i/2} \end{pmatrix}, \\
\qquad -\pi < \arg \zeta < -\frac{2\pi}3,
\end{multline*}
where $I_\beta$ and $K_\beta$ are modified Bessel functions \cite[\S 10.25]{NIST} and $H_\beta^{(1)}$ and $H_\beta^{(2)}$ are Hankel functions
\cite[\S 10.2]{NIST}.
The asymptotic behavior of the modified Bessel functions $I_\beta$ and $K_\beta$ \cite[\S 10.40]{NIST} shows that
\begin{equation}   \label{eq:Pbasym}
  \Psi(\zeta) = \begin{pmatrix} \frac{1}{\sqrt{2\pi}\zeta^{1/4}} & 0 \\ 0 & \sqrt{2\pi} \zeta^{1/4} \end{pmatrix}
          \frac{1}{\sqrt{2}} \begin{pmatrix}  1 + \O(\zeta^{-1/2}) & i + \O(\zeta^{-1/2}) \\ i + \O(\zeta^{-1/2} & 1 + \O(\zeta^{-1/2}) \end{pmatrix}
     \begin{pmatrix}  e^{2\zeta^{1/2}} & 0 \\ 0 & e^{-2\zeta^{1/2}} \end{pmatrix}
\end{equation}
when $\zeta \to \infty$ in the sector $|\arg \zeta| < 2\pi/3$. The same asymptotic formula holds in the regions $2\pi/3 < |\arg \zeta| < \pi$
if one uses the asymptotic behavior of the Hankel functions $H_\beta^{(1)}$ and $H_\beta^{(2)}$ \cite[\S 10.17]{NIST}.

We use this $2\times 2$ matrix $\Psi$ (in fact we use $\Psi^{-T} = (\Psi^{-1})^T$, the transpose of the inverse of $\Psi$) to construct the parametrix $P_b$ around the point $b$ as follows. We define the contours $\Sigma_{\pm}^{a,b}$ inside $\Gamma_b$ as the preimages of the rays $\gamma_1, \gamma_3$
under the mapping $\zeta = (n+m)^2 g_1^2(z)/4$, where $g_1$ is given in \eqref{eq:g}. Then the parametrix $P_b$ is given by
\begin{equation}  \label{eq:Pb}
  P_b(z) = E_n^b \begin{pmatrix} \Psi^{-T}\Bigl(\frac{(n+m)^2g_1^2(z)}{4}\Bigr) & 0 \\[10pt] 0 & 1  \end{pmatrix} \begin{pmatrix} W_1(z) & 0 & 0 \\ 0 & 1/W_1(z) & 0 \\ 0 & 0 & 1 \end{pmatrix}
        \begin{pmatrix} \Phi_1^{\frac{n+m}2} & 0 & 0 \\  0 & \Phi_1^{-\frac{n+m}2} & 0 \\ 0 & 0 & 1 \end{pmatrix},
\end{equation}
with $W_1(z) = \bigl(2\pi i (z-a)^\alpha (z-b)^\beta\bigr)^{1/2}$ and
\[  E_n^b(z) = N(z) \begin{pmatrix} B_n  & 0 \\ 0 & 1 \end{pmatrix}, \]
with
\[      B_n = \begin{pmatrix} 1/W_1 & 0 \\ 0 & W_1 \end{pmatrix} \frac1{\sqrt{2}} \begin{pmatrix} 1 & i \\ i & 1 \end{pmatrix}
    \begin{pmatrix} \frac1{\sqrt{\pi (n+m)} g_1^{1/2}} & 0 \\ 0 & \sqrt{\pi (n+m)} g_1^{1/2} \end{pmatrix}.   \]
One can then verify, after some calculations and by using the asymptotic behavior \eqref{eq:Pbasym},
that $P_b N^{-1} = \mathbb{I} + \O\Bigl(\frac1{n+m}\Bigr)$ on the contour $\Gamma_b$.

\bigskip

The parametrix $P_a$ around $a$ can be constructed in a similar way and uses the Bessel functions of order $\alpha$.
For $c$ and $d$ we proceed in the same way when $\textup{supp}(\nu_2)=[c,d]$ and $c^*<c<d$ but observe that now the jumps of $S$ near $c$ and $d$ are of the form
\[   \begin{pmatrix} 1 & 0 \\ 0 & \hat{J} \end{pmatrix}, \]
with $\hat{J}$ a $2 \times 2$ matrix. So the parametrices $P_c$ and $P_d$ can also be constructed using the function $\Psi$ (but with $\gamma$ or $\delta$)
and $P_c$ contains the Bessel functions of order $\gamma$, whereas $P_d$ contains the Bessel functions of order $\delta$. In particular we have
\begin{equation}  \label{eq:Pd}
   P_d = E_n^d \begin{pmatrix} 1 & 0 \\[10pt] 0 &  \Psi^{-T}\bigl(\frac{m^2g_2^2(z)}{4}\bigr) \end{pmatrix}
        \begin{pmatrix} 1 & 0 & 0 \\ 0 & W_2(z) & 0 \\ 0 & 0 & 1/W_2(z)  \end{pmatrix}
        \begin{pmatrix} 1 & 0 & 0  \\ 0 & \Phi_2^{\frac{m}2} & 0 \\  0 & 0 & \Phi_2^{-\frac{m}2} \end{pmatrix},
\end{equation}
with $W_2(z) = \bigl( 2\pi i(z-c)^\gamma (z-d)^\delta \bigr)^{1/2}$ and
\[  E_n^d(z) = N(z) \begin{pmatrix} 1 & 0  \\ 0 & C_n \end{pmatrix},  \]
with
\[   C_n = \begin{pmatrix} 1/W_2 & 0 \\ 0 & W_2 \end{pmatrix} \frac1{\sqrt{2}} \begin{pmatrix} 1 & i \\ i & 1 \end{pmatrix}
    \begin{pmatrix} \frac1{\sqrt{\pi m} g_2^{1/2}} & 0 \\ 0 & \sqrt{\pi m} g_2^{1/2} \end{pmatrix}.   \]
Then $P_d N^{-1} = \mathbb{I} + \O\Bigl(\frac1{m}\Bigr)$ on the contour $\Gamma_d$.
When $\textup{supp}(\nu_2) = [c^*,d]$ with $c < c^* < d$ we need another parametrix $P_{c^*}$ around $c^*$. The point $c^*$ is a soft edge
and the density $\nu_2'$ vanishes near $c^*$ as $(x-c^*)^{1/2}$. Near $c^*$ we look for a local Riemann-Hilbert problem with contours as in
Figure \ref{fig:parc*}

\begin{figure}[ht]
\unitlength=2.5pt
\begin{picture}(150,50)(10,25)
\thicklines
\put(100,50){\circle*{3}}
\put(100,70){\qbezier(0,0)(20,0)(20,-20)}
\put(120,50){\qbezier(0,0)(0,-20)(-20,-20)}
\put(100,30){\qbezier(0,0)(-20,0)(-20,20)}
\put(80,50){\qbezier(0,0)(0,20)(20,20)}
\put(100,50){\line(-1,0){20}}
\put(100,50){\line(1,0){20}}
\put(100,50){\line(2,1){18.5}}
\put(100,50){\line(2,-1){18.5}}
\put(100,43){$c^*$}
\put(88,50){\vector(1,0){3}}
\put(112,50){\vector(1,0){1}}
\put(100,70){\vector(1,0){3}}
\put(112,56){\vector(2,1){1}}
\put(112,44){\vector(2,-1){1}}
\put(110,70){$\Gamma_{c^*}$}
\end{picture}
\caption{Parametrix around $c^*$}
\label{fig:parc*}
\end{figure}
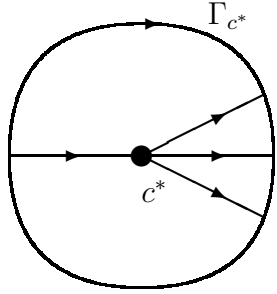

\noindent The jumps on these contours are all of the form
\[    \begin{pmatrix} 1 & 0 \\ 0 & \hat{J} \end{pmatrix}, \]
and so locally the problem reduces to a $2 \times 2$ problem. It is well known that around a soft edge one can use Airy functions
for the local parametrix, and the matching on the boundary $\Gamma_{c^*}$ can be achieved in a similar way as above, but by using the 
asymptotic behavior of the Airy function instead of Bessel functions. See, e.g., \cite[\S 7]{BleherKuijl} where this has been done in detail.
For $c=c^*$ there is a transition from soft edge $(c<c^*)$ to hard edge $(c>c^*)$. We will not deal with this special case since it requires a 
different parametrix in terms of Painlev\'e transcendents.

\section{Asymptotics for the type I multiple orthogonal polynomials}   \label{sec8}

The final transformation is
\begin{equation}  \label{eq:R}
    R(z) = \begin{cases}
          S(z) N^{-1}(z), & z \textrm{ outside } \Gamma_a, \Gamma_b, \Gamma_c, \Gamma_d, \\
          S(z) P_e^{-1}(z), & z \textrm{ inside } \Gamma_e,\ e \in \{a,b,c \textup{ or } c^*,d\}.
             \end{cases}
\end{equation}
The contours of the Riemann-Hilbert problem for $R$ are given in Figure \ref{fig:RHP-R} for the case when $\textup{supp}(\nu_2)=[c,d]$ (top picture).
When $\textup{supp}(\nu_2)=[c^*,d]$ (bottom picture) one replaces $c$ by $c^*$ and there is an extra horizontal contour from $c$ to $\Gamma_{c^*}$.

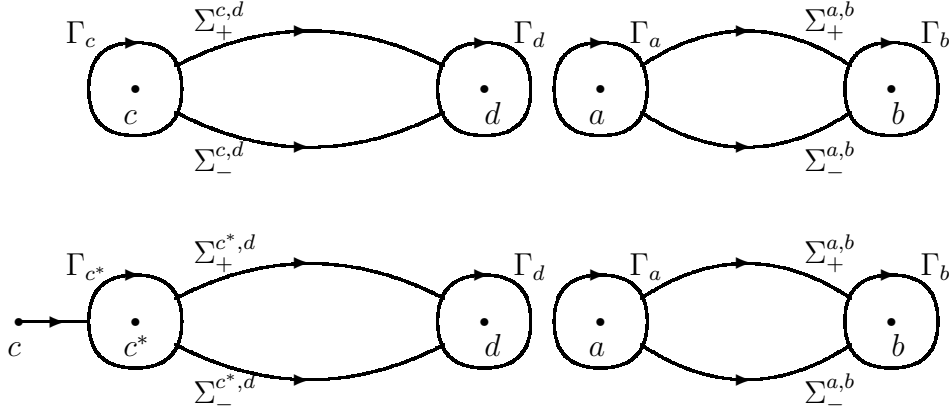
\begin{figure}[ht]
\unitlength=2.2pt
\begin{picture}(200,80)(-20,0)
\thicklines
\put(20,60){\circle*{1.5}}
\put(18,54){$c$}
\put(80,60){\circle*{1.5}}
\put(80,54){$d$}
\put(100,60){\circle*{1.5}}
\put(98,54){$a$}
\put(150,60){\circle*{1.5}}
\put(150,54){$b$}
\put(100,60){\qbezier(7,4)(25,16)(43,4)}
\put(100,60){\qbezier(7,-4)(25,-16)(43,-4)}
\put(20,60){\qbezier(7,4)(30,16)(53,4)}
\put(20,60){\qbezier(7,-4)(30,-16)(53,-4)}
\put(123,50){\vector(1,0){3}}
\put(123,70){\vector(1,0){3}}
\put(49,50){\vector(1,0){1}}
\put(49,70){\vector(1,0){1}}
\put(135,70){$\Sigma_+^{a,b}$}
\put(135,46){$\Sigma_-^{a,b}$}
\put(30,70){$\Sigma_+^{c,d}$}
\put(30,46){$\Sigma_-^{c,d}$}
\put(12,60){\qbezier(0,0)(0,8)(8,8)}
\put(20,68){\qbezier(0,0)(8,0)(8,-8)}
\put(28,60){\qbezier(0,0)(0,-8)(-8,-8)}
\put(20,52){\qbezier(0,0)(-8,0)(-8,8)}
\put(72,60){\qbezier(0,0)(0,8)(8,8)}
\put(80,68){\qbezier(0,0)(8,0)(8,-8)}
\put(88,60){\qbezier(0,0)(0,-8)(-8,-8)}
\put(80,52){\qbezier(0,0)(-8,0)(-8,8)}
\put(92,60){\qbezier(0,0)(0,8)(8,8)}
\put(100,68){\qbezier(0,0)(8,0)(8,-8)}
\put(108,60){\qbezier(0,0)(0,-8)(-8,-8)}
\put(100,52){\qbezier(0,0)(-8,0)(-8,8)}
\put(142,60){\qbezier(0,0)(0,8)(8,8)}
\put(150,68){\qbezier(0,0)(8,0)(8,-8)}
\put(158,60){\qbezier(0,0)(0,-8)(-8,-8)}
\put(150,52){\qbezier(0,0)(-8,0)(-8,8)}
\put(20,68){\vector(1,0){1}}
\put(80,68){\vector(1,0){1}}
\put(100,68){\vector(1,0){1}}
\put(150,68){\vector(1,0){1}}
\put(8,68){$\Gamma_c$}
\put(85,68){$\Gamma_d$}
\put(105,68){$\Gamma_a$}
\put(155,68){$\Gamma_b$}

\put(20,20){\circle*{1.5}}
\put(18,14){$c^*$}
\put(0,20){\circle*{1.5}}
\put(-2,14){$c$}
\put(80,20){\circle*{1.5}}
\put(80,14){$d$}
\put(100,20){\circle*{1.5}}
\put(98,14){$a$}
\put(150,20){\circle*{1.5}}
\put(150,14){$b$}
\put(0,20){\line(1,0){12}}
\put(100,20){\qbezier(7,4)(25,16)(43,4)}
\put(100,20){\qbezier(7,-4)(25,-16)(43,-4)}
\put(20,20){\qbezier(7,4)(30,16)(53,4)}
\put(20,20){\qbezier(7,-4)(30,-16)(53,-4)}
\put(123,10){\vector(1,0){3}}
\put(123,30){\vector(1,0){3}}
\put(49,10){\vector(1,0){1}}
\put(49,30){\vector(1,0){1}}
\put(135,30){$\Sigma_+^{a,b}$}
\put(135,6){$\Sigma_-^{a,b}$}
\put(30,30){$\Sigma_+^{c^*,d}$}
\put(30,6){$\Sigma_-^{c^*,d}$}
\put(12,20){\qbezier(0,0)(0,8)(8,8)}
\put(20,28){\qbezier(0,0)(8,0)(8,-8)}
\put(28,20){\qbezier(0,0)(0,-8)(-8,-8)}
\put(20,12){\qbezier(0,0)(-8,0)(-8,8)}
\put(72,20){\qbezier(0,0)(0,8)(8,8)}
\put(80,28){\qbezier(0,0)(8,0)(8,-8)}
\put(88,20){\qbezier(0,0)(0,-8)(-8,-8)}
\put(80,12){\qbezier(0,0)(-8,0)(-8,8)}
\put(92,20){\qbezier(0,0)(0,8)(8,8)}
\put(100,28){\qbezier(0,0)(8,0)(8,-8)}
\put(108,20){\qbezier(0,0)(0,-8)(-8,-8)}
\put(100,12){\qbezier(0,0)(-8,0)(-8,8)}
\put(142,20){\qbezier(0,0)(0,8)(8,8)}
\put(150,28){\qbezier(0,0)(8,0)(8,-8)}
\put(158,20){\qbezier(0,0)(0,-8)(-8,-8)}
\put(150,12){\qbezier(0,0)(-8,0)(-8,8)}
\put(20,28){\vector(1,0){1}}
\put(80,28){\vector(1,0){1}}
\put(100,28){\vector(1,0){1}}
\put(150,28){\vector(1,0){1}}
\put(7,20){\vector(1,0){1}}
\put(8,28){$\Gamma_{c^*}$}
\put(85,28){$\Gamma_d$}
\put(105,28){$\Gamma_a$}
\put(155,28){$\Gamma_b$}
\end{picture}
\caption{Riemann-Hilbert problem for the final matrix $R$}
\label{fig:RHP-R}
\end{figure}

The jumps of $S$ on $[a,b]$, $[c,d]$ or $[c^*,d]$ and the
lips of the lenses $\Sigma^{a,b}$ and $\Sigma^{c,d}$ or $\Sigma^{c^*,d}$ inside the curves $\Gamma_a, \Gamma_b, \Gamma_c$ or $\Gamma_{c^*}, \Gamma_d$ are eliminated by jumps that the global parametrix $N$ and each of the local parametrices $P_a,P_b,P_c$ or $P_{c^*},P_d$ have at those contours. On the remaining contours the jumps
tend to the identity matrix $\mathbb{I}$ uniformly, at the rate $\O(1/n)$ on $\Gamma_a,\Gamma_b,\Gamma_c$ or $\Gamma_{c^*},\Gamma_d$, and exponentially fast
at the remaining lips of the lenses $\Sigma^{a,b}$ and $\Sigma^{c,d}$ or $\Sigma^{c^*,d}$. The matrix $R$ for this Riemann-Hilbert problem then
converges uniformly on $\mathbb{C}$ to the identity matrix (see, e.g., \cite[\S7.5]{Deift} \cite[Thm. 3.1]{Arno})
\[  \lim_{n \to \infty} \| R - \mathbb{I} \|_\infty = 0, \]
and in fact the rate of convergence is of the same order as the rate at which the jumps converge to the identity matrix,
\begin{equation}  \label{eq:Rasymp}
    \| R(z) - \mathbb{I} \|_\infty = \O(1/n).
\end{equation}

\begin{theorem}
Let $A_{n,m}, B_{n,m}$ be the type I multiple orthogonal polynomials for a Nikishin system with measures $(\mu_1,\mu_2)$ on $[a,b]$
satisfying \eqref{eq:w}--\eqref{eq:mu1}, with a measure $\sigma$ on $[c,d]$ satisfying \eqref{eq:sigma}. Let $(n,m)$ be multi-indices that tend to infinity but for which $m/(n+m)=q_1$ remains constant, with $0 < q_1 \leq 1/2$. Then, uniformly
on compact subsets of $\mathbb{C} \setminus ([a,b] \cup [c,d])$
\begin{multline*}
   A_{n,m}(z) = \bigl[ N_1(\psi_1(z)) + \O(1/n) \bigr] \frac{D_0(\infty)}{D_1(z)} e^{(n+m)g_1(z)-mg_2(z)+(n+m)\ell_1} \\
              - \bigl[ N_1(\psi_2(z)) + \O(1/n) \bigr] \frac{D_0(\infty)}{D_2(z)} e^{mg_2(z)+(n+m)(\ell_1+\ell_2)}\int_c^d \frac{d\sigma(t)}{z-t},
\end{multline*}
and
\[   B_{n,m}(z) = \bigl[N_1(\psi_2(z)) + \O(1/n)\bigr] \frac{D_0(\infty)}{D_2(z)} e^{mg_2(z) + (n+m)(\ell_1+\ell_2)}, \]
where $g_1$ and $g_2$ are given in \eqref{eq:g}, $\ell_1$ and $\ell_2$ are given in \eqref{eq:Uab}--\eqref{eq:Ucd}, $N_1$ is
given in \eqref{eq:NN}, and $D_0, D_1, D_2$ are given in \eqref{eq:D}.
Furthermore
\[   A_{n,m}(z) + B_{n,m} \int_c^d \frac{d\sigma(t)}{z-t} =  \bigl[ N_1(\psi_1(z)) + \O(1/n) \bigr] \frac{D_0(\infty)}{D_1(z)} e^{(n+m)g_1(z)-mg_2(z)+(n+m)\ell_1}. \]
\end{theorem}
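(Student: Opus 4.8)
The plan is to read off $A_{n,m}$ and $B_{n,m}$ as entries of the original matrix $X$ and then unwind the chain $X\mapsto U\mapsto V\mapsto S\mapsto R$ backwards. Put $\widehat w(z)=\int_c^d d\sigma(t)/(z-t)$, the analytic continuation to $\mathbb{C}\setminus[c,d]$ of the weight $w$. From $U=XM$ with $M$ the unipotent lower-triangular matrix of \eqref{eq:U} one gets $B_{n,m}=X_{13}=U_{13}$ and $A_{n,m}=X_{12}=U_{12}-\widehat w\,U_{13}$, so that $A_{n,m}+\widehat w\,B_{n,m}=U_{12}$. Writing $L=\mathrm{diag}(L_1,L_2,L_3)$ and using that $L$ together with the $g$-matrix of \eqref{eq:V} are diagonal, the relation $V=LU\,\mathrm{diag}(e^{(n+m)g_1},e^{-(n+m)g_1+mg_2},e^{-mg_2})L^{-1}$ inverts entrywise to $U_{1j}=L_1^{-1}L_j\,e^{-\theta_j}\,V_{1j}$, where $\theta_1=(n+m)g_1$, $\theta_2=-(n+m)g_1+mg_2$, $\theta_3=-mg_2$. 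With the explicit entries of $L$, the cancellations $L_1^{-1}L_2=e^{(n+m)\ell_1}$ and $L_1^{-1}L_3=e^{(n+m)(\ell_1+\ell_2)}$ produce
\[  A_{n,m}+\widehat w\,B_{n,m}=U_{12}=V_{12}\,e^{(n+m)g_1-mg_2+(n+m)\ell_1},\qquad B_{n,m}=U_{13}=V_{13}\,e^{mg_2+(n+m)(\ell_1+\ell_2)},\]
these identities being valid on all of $\mathbb{C}\setminus([a,b]\cup[c,d])$.

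Next I fix a compact set $K\subset\mathbb{C}\setminus([a,b]\cup[c,d])$ and choose the lenses thin enough and the disks $\Gamma_a,\Gamma_b,\Gamma_c,\Gamma_d$ (with $\Gamma_{c^*}$ in place of $\Gamma_c$ in the other case) small enough that $K$ lies outside all of them; the final formulas are independent of such an admissible choice. On $K$ one has $S=V$, and by \eqref{eq:R} also $S=RN$, so $V_{1j}=\sum_k R_{1k}N_{kj}$. Substituting $N=\mathrm{diag}(D_0(\infty),D_1(\infty),D_2(\infty))\,N_0\,\mathrm{diag}(D_0(z),D_1(z),D_2(z))^{-1}$ from \eqref{eq:N} and \eqref{eq:N0}, one sees that the $j$-th column of $N$ carries the common factor $1/D_{j-1}(z)$ and has top entry $D_0(\infty)(N_0)_{1j}(z)/D_{j-1}(z)$, with $(N_0)_{12}=N_1(\psi_1(z))$ and $(N_0)_{13}=N_1(\psi_2(z))$ by \eqref{eq:N0} and \eqref{eq:NN}. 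Since $\|R-\mathbb{I}\|_\infty=\O(1/n)$ by \eqref{eq:Rasymp}, since $N_0$ is analytic (hence bounded) on $K$, and since the Szeg\H{o} functions $D_j$ from \eqref{eq:D} are analytic and nowhere zero (hence bounded and bounded away from $0$) on $K$ because $K$ stays away from the branch points $a,b,c,d$, this gives, uniformly on $K$,
\[  V_{12}(z)=\frac{D_0(\infty)}{D_1(z)}\bigl[N_1(\psi_1(z))+\O(1/n)\bigr],\qquad V_{13}(z)=\frac{D_0(\infty)}{D_2(z)}\bigl[N_1(\psi_2(z))+\O(1/n)\bigr].\]

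Combining the two steps gives at once $B_{n,m}(z)=\bigl[N_1(\psi_2(z))+\O(1/n)\bigr]\frac{D_0(\infty)}{D_2(z)}e^{mg_2(z)+(n+m)(\ell_1+\ell_2)}$ and $A_{n,m}(z)+\widehat w(z)B_{n,m}(z)=\bigl[N_1(\psi_1(z))+\O(1/n)\bigr]\frac{D_0(\infty)}{D_1(z)}e^{(n+m)g_1(z)-mg_2(z)+(n+m)\ell_1}$, which are two of the asserted formulas; subtracting $\widehat w(z)B_{n,m}(z)$ from the latter yields the formula for $A_{n,m}$ itself. I do not expect a genuine obstacle here: the substantive work — the steepest-descent transformations, the global parametrix on the genus-zero Riemann surface $\mathfrak{R}$, the Bessel and Airy local parametrices, and the estimate \eqref{eq:Rasymp} — has already been carried out. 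The only points demanding care are the exponential bookkeeping in passing from $V$ back to $U$, where the constants $\ell_1,\ell_2$ and the $g$-functions must be combined with the correct signs, and the verification that every discarded term is truly $\O(1/n)$ uniformly on the given compact set, which rests on the boundedness of $N_0$ and of $D_j^{\pm1}$ there.
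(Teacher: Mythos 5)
Your proposal is correct and follows essentially the same route as the paper: read off $A_{n,m}=X_{12}$, $B_{n,m}=X_{13}$, unwind $X\to U\to V$ with the diagonal factors $e^{(n+m)\ell_1+(n+m)g_1-mg_2}$ and $e^{(n+m)(\ell_1+\ell_2)+mg_2}$, use $V=S=RN$ outside the lenses and disks, and invoke \eqref{eq:Rasymp} together with \eqref{eq:N}--\eqref{eq:N0}. Your extra bookkeeping of the factors $L_1^{-1}L_j$ and the boundedness of $N_0$ and $D_j^{\pm1}$ on the compact set only makes explicit what the paper leaves implicit.
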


\begin{proof}
We need to undo all the transformations from the original matrix $X$ in Section \ref{sec1} to $R$ and then use the asymptotic behavior \eqref{eq:Rasymp}
for $R$. From \eqref{eq:U} we find
\[   A_{n,m}(z) = X_{1,2} = U_{1,2} - U_{1,3} \int_c^d \frac{d\sigma(t)}{z-t} ,  \]
and
\[   B_{n,m}(z) = X_{1,3} = U_{1,3} .  \]
From \eqref{eq:V} we find
\begin{eqnarray*}
   U_{1,2} &=& V_{1,2} e^{(n+m)\ell_1+(n+m)g_1-mg_2}, \\
   U_{1,3} &=& V_{1,3} e^{(n+m)(\ell_1+\ell_2)+mg_2}.
\end{eqnarray*}
Since $z$ is on a compact subset of $\mathbb{C}\setminus ([a,b] \cup [c,d])$, we will take the lenses around $[a,b]$ and $[c,d]$ and the
neighborhoods around $a,b,c,d$ sufficiently small so that the compact subset is outside the system of curves in Figure \ref{fig:RHP-R}. Then
$V = S$ for the matrix $S$ in Section \ref{sec5}. Finally, from \eqref{eq:R} we find that
\begin{eqnarray}
   S_{1,2} &=& R_{1,1}N_{1,2} + R_{1,2}N_{2,2} + R_{1,3} N_{3,2} \label{eq:S12}\\
   S_{1,3} &=& R_{1,1}N_{1,3} + R_{1,2}N_{2,3} + R_{1,3} N_{3,3} .  \label{eq:S13}
\end{eqnarray}
Then by using the asymptotic behavior in \eqref{eq:Rasymp} and the expressions for $N$ from \eqref{eq:N} and \eqref{eq:N0}, we find the required asymptotic result away from $[a,b] \cup [c,d]$.

When $\textup{supp}(\nu_2)=[c^*,d]$ with $c < c^* < d$, then the asymptotic formula for $A_{n,m}$ still holds on $\mathbb{C} \setminus ([a,b] \cup [c,d])$ 
and it is not valid on $[c,c^*]$ since $A_{n,m}(z)$ contains the function $w(z)$ from \eqref{eq:w} and this function makes a jump
over the interval $[c,c^*]$. The asymptotic formula for $B_{n,m}$ is true on $[c,c^*-\epsilon]$, taking into account that $g_2^{\pm}(z) = -U(x;\nu_2) \pm i\pi$ there. Note however that the functions $N_1, D_1, D_2$ are different from the case where $\textup{supp}(\nu_2) = [c,d]$ because the Riemann
surface is different.
\end{proof}

\bigskip
Notice that using \eqref{inverse}, we can rewrite \eqref{orthAB}  as
\[
   \int_a^b  \Bigl( A_{n,m}(x) \tilde{w}(x) + \tilde{B}_{n,m}(x) \Bigr) x^k w(x)w_1(x)\, dx = 0, \qquad 0 \leq k \leq n+m-2,
\]
where $\tilde{B}_{n,m} = -B_{n,m} -\ell A_{n,m}$ and $\deg(\tilde{B}_{n,m}) \leq m-1$ when $n < m$. From here, reasoning as in Section \ref{sec3}, relations \eqref{orto1}-\eqref{AB/H} may be replaced by
\[
 \int_a^b  H_{n,m}(x) \frac{ A_{n,m}(x)\tilde{w}(x) + \tilde{B}_{n,m}(x) }{H_{n,m}(x)} x^{k} w(x)w_1(x)\, dx = 0,\qquad k=0,\ldots,n+m-2.
\]

\[
   \int_c^d A_{n,m}(x) x^k  \frac{d\tilde{\sigma}(x)}{H_{n,m}(x)} = 0, \qquad 0 \leq k \leq n-2,
\]
and
\[
   \frac{A_{n,m}(x)\tilde{w}(x) + \tilde{B}_{n,m}(x)}{H_{n,m}(x)} = \int_c^d \frac{\tilde{B}_{n,m}(t)}{x-t} \frac{d\tilde{\sigma}(t)}{H_{n,m}(t)}, \qquad x \notin [c,d],
\]
where $H_{n,m}$ represents the same polynomial as before.

Now, if we replace in  \eqref{eq:Uab}-\eqref{eq:Ucd} the constant $q_1$ by $\tilde{q}_1 = \lim_{n} \frac{n}{n+m} = 1-{q}_1, 1/2 < q_1 < 1$ and $(\tilde{\nu}_1,\tilde{\nu}_2)$ is the solution of the corresponding variational relations, then $\tilde{\nu}_1$ gives the normalized asymptotic zero distribution of the zeros of the polynomials $H_{n,m}$ (as before) but $\tilde{\nu}_2$ gives the normalized asymptotic zero distribution of the zeros of the polynomials $A_{n,m}$. Repeating Sections 4--7 one obtains an analogue of Theorem 1 for the case when $1/2 < q_1 < 1$. The details are left to the reader.

The asymptotic behavior on the intervals $[a,b]$ and $[c,d]$ can be obtained in a similar way. The only difference is that we need to use the relation
between $S$ and $V$ inside the lenses, and $S \neq V$ there. The asymptotic behavior of $B_{n,m}$ on $(c,d)$ is then given by the following theorem.

\begin{theorem}
Let $B_{n,m}$ be the type I multiple orthogonal polynomials for a Nikishin system with measures $(\mu_1,\mu_2)$ on $[a,b]$
satisfying \eqref{eq:w}--\eqref{eq:mu1}, with a measure $\sigma$ on $[c,d]$ satisfying \eqref{eq:sigma}. Let $(n,m)$ be multi-indices that tend to infinity but for which $m/(n+m)=q_1$ remains constant, with $0 < q_1 \leq 1/2$. Then for $\textup{supp}(\nu_2)=[c,d]$ one has uniformly
on closed subintervals of $(c,d)$
\[   B_{n,m}(x) = -2[N_1(\psi_1^+(x)) + \mathcal{O}(1/n)] \frac{D_0(\infty)}{|D_2^+(x)|} e^{-mU(x;\nu_2)} \cos \Bigl( m\pi \varphi_2(x) - \arg D_2^+(x) \Bigr). \]
If $\textup{supp}(\nu_2) = [c^*,d]$ then this asymptotic formula holds unformly on closed intervals of $(c^*,d)$.
\end{theorem}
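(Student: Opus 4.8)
The strategy is to undo the chain of transformations from the original matrix $X$ through $U$, $V$, $S$ to $R$ exactly as in the proof of Theorem~1, but now on the interval $(c,d)$, where the essential new feature is that $V\neq S$ because $(c,d)$ sits inside the lens that was opened around it. Since the measure $(-1)^{n+m-1}d\sigma/H_{n,m}$ in \eqref{orto2} is real, $B_{n,m}$ has real coefficients, and it is this reality that will convert the two surviving oscillatory contributions into a single cosine. As in the proof of Theorem~1 one has $B_{n,m}(z)=U_{1,3}(z)=V_{1,3}(z)\,e^{(n+m)(\ell_1+\ell_2)+mg_2(z)}$, and from the jump of $U$ on $(c,d)$ the entry $U_{1,3}$, hence $B_{n,m}$, is continuous across $(c,d)$, so it suffices to compute one boundary value; by \eqref{eq:g2} we have $g_2^{+}(x)=-U(x;\nu_2)+i\pi\varphi_2(x)$ for $x\in(c,d)$.

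Because $V$ is analytic up to $(c,d)$ from above, the value $V_{1,3}^{+}(x)$ may be taken from inside the upper part of the lens, where
\[
  V(z)=S(z)\begin{pmatrix}1&0&0\\ 0&1&-\Phi_2^{-m}(z)/v_2(z)\\ 0&0&1\end{pmatrix}.
\]
On a closed subinterval of $(c,d)$ we are outside the circles $\Gamma_c,\Gamma_d$, so $S=RN$; combining this with $R=\mathbb{I}+\O(1/n)$ from \eqref{eq:Rasymp} and the boundedness of the entries of $N^{+}$ there (and of $1/v_2$) gives
\[
  V_{1,3}^{+}(x)=N_{1,3}^{+}(x)-\frac{e^{-2\pi i m\varphi_2(x)}}{v_2(x)}\,N_{1,2}^{+}(x)+\O(1/n).
\]
Now I would substitute $N_{1,2}=D_0(\infty)N_1(\psi_1)/D_1$ and $N_{1,3}=D_0(\infty)N_1(\psi_2)/D_2$ from \eqref{eq:N}--\eqref{eq:NN}, and use that across $[c,d]$ the sheets $\mathfrak{R}_1$ and $\mathfrak{R}_2$ of $\mathfrak{R}$ are interchanged, so $\psi_2^{+}(x)=\psi_1^{-}(x)$, together with the Szeg\H{o}-function relation $D_2^{+}=v_2D_1^{-}$ from \eqref{eq:6.2} and the reflection symmetry $\psi_1^{-}(x)=\overline{\psi_1^{+}(x)}$ coming from the real coefficients of the cubic defining $\psi$.

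Feeding these identities into $V_{1,3}^{+}(x)$ and multiplying by $e^{mg_2^{+}(x)}=e^{-mU(x;\nu_2)}e^{i\pi m\varphi_2(x)}$ (and the normalizing constant $e^{(n+m)(\ell_1+\ell_2)}$) turns $B_{n,m}(x)$, modulo $\O(1/n)$, into a fixed multiple of a quantity plus its complex conjugate, the two pieces carrying the factors $e^{i\pi m\varphi_2(x)}$ and $e^{-i\pi m\varphi_2(x)}$. Since $B_{n,m}(x)$ is real this expression collapses to twice a real part; extracting the modulus via $|D_2^{+}(x)|=|v_2(x)|\,|D_1^{+}(x)|$ produces the amplitude $N_1(\psi_1^{+}(x))D_0(\infty)/|D_2^{+}(x)|$ (the quantity $N_1(\psi_1^{+}(x))$ being real on $(c,d)$, as it must be once all other factors and $B_{n,m}$ are real), while the remaining phase, through $\arg D_2^{+}=\arg v_2+\arg D_1^{-}$ and the oscillatory factor $e^{-i\pi m\varphi_2(x)}$, organizes itself into $\cos\bigl(m\pi\varphi_2(x)-\arg D_2^{+}(x)\bigr)$, with the overall factor $-2$ coming from the $-1$ in the lens factor and the sign in $N_1(\psi_1^{-})=-\overline{N_1(\psi_1^{+})}$. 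The $\O(1/n)$ error is uniform on closed subintervals of $(c,d)$ because there $R-\mathbb{I}=\O(1/n)$ uniformly and $N_1\circ\psi_j$, $D_j^{\pm}$, $1/v_2$ are bounded and bounded away from zero; near $c$ and $d$ one would instead have to use the local parametrices $P_c,P_d$, which is why the statement is restricted to closed subintervals. For $\textup{supp}(\nu_2)=[c^{*},d]$ the identical argument works on closed subintervals of $(c^{*},d)$ with the Riemann surface and Szeg\H{o} functions attached to $[c^{*},d]$; on $[c,c^{*}]$ the lens is not opened and $\varphi_2\equiv1$, so there is no oscillation and the formula is not asserted there.

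I expect the main obstacle to be the bookkeeping of boundary values, branch cuts and signs across $[c,d]$: in particular verifying that the two surviving terms are \emph{exactly} complex conjugates of one another — reconciling the reality of $B_{n,m}$, the reflection symmetry $\psi_1^{-}=\overline{\psi_1^{+}}$, the branch of $r$ along $\Gamma_2^{+}$, and the precise transformation of $D_1$ across $[c,d]$ (including the constant imaginary parts in $\log v_1,\log v_2$) — so that they fuse cleanly into a single cosine with amplitude $D_0(\infty)/|D_2^{+}(x)|$ and leave no residual oscillation in the prefactors.
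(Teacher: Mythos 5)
Your proposal is correct and follows essentially the same route as the paper's proof: unwind $X\to U\to V\to S\to R$ via $B_{n,m}=V_{1,3}\,e^{(n+m)(\ell_1+\ell_2)+mg_2}$, use the lens relation $V_{1,3}=S_{1,3}-S_{1,2}\Phi_2^{-m}/v_2$ on $(c,d)$, then $S=RN$ with $R=\mathbb{I}+\mathcal{O}(1/n)$, and finally the boundary identities $\psi_2^+=\psi_1^-$, the Szeg\H{o} relations in \eqref{eq:6.2}, and $g_2^+(x)=-U(x;\nu_2)+i\pi\varphi_2(x)$ to fuse the two oscillatory terms into the cosine. Your closing step via reality of $B_{n,m}$ and conjugation symmetry is just a repackaging of the identities the paper lists ($N_1(\psi_1^+)=-N_1(\psi_1^-)$, $v_2D_1^+=D_2^-$ together with Schwarz reflection), so no essential difference remains.
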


\begin{proof}
Since $x$ is now on a closed subinterval of $(c,d)$, we need to use $S$ inside the lens around $(c,d)$. We will use the limiting values $S_+$
to get the behavior of $B_{n,m}$ on $(c,d)$. The relation between $S$ and $V$, as described in Section \ref{sec5}, is
\[   V_{1,3} = S_{1,3} - S_{1,2} \frac{\Phi_2^{-m}}{v_2} . \]
We avoid the points $c$ and $d$ by taking the neighborhoods around those points small enough. Then $S$ and $R$ are related by \eqref{eq:S12}--\eqref{eq:S13}. The asymptotic behavior of $R$ in \eqref{eq:Rasymp} then gives
\[    B_{n,m}(x) = \left( [N_1(\psi_2^+) + \mathcal{O}(1/n)] \frac{D_0(\infty)}{D_2^+(x)}
 - [N_1(\psi_1^+)+\mathcal{O}(1/n)] \frac{D_0(\infty)}{D_1^+(x)v_2(x)} \right) e^{(n+m)(\ell_1+\ell_2)+mg_2^+}.  \]
Now, recall that on $(c,d)$ we have $v_2(x)D_1^+(x) = D_2^-(x)$, $\psi_2^+(x)=\psi_1^-(x)$, $N_1(\psi_1^+)=-N_1(\psi_1^-)$, $\Phi_2^+(x)= \exp(2\pi i \varphi_2(x))$, and
$g_2^+(x) = -U(x;\nu_2) + i\pi \varphi_2(x)$, see \eqref{eq:6.2}, Figure \ref{fig:Rimage}, and \eqref{eq:g2}. Combining all these relations then gives the required result. If $\textup{supp}(\nu_2) = [c^*,d]$ we need to use $S$ inside the lens around $(c^*,d)$. On $[c,c^*-\epsilon]$ the $B_{n,m}$ has
exponential behavior, see our remark at the end of the proof of previous theorem. 
\end{proof}

On the interval $(a,b)$ we have the following asymptotic result:

\begin{theorem}
Let $A_{n,m}, B_{n,m}$ be the type I multiple orthogonal polynomials for a Nikishin system with measures $(\mu_1,\mu_2)$ on $[a,b]$
satisfying \eqref{eq:w}--\eqref{eq:mu1}, with a measure $\sigma$ on $[c,d]$ satisfying \eqref{eq:sigma}. Let $(n,m)$ be multi-indices that tend to infinity but for which $m/(n+m)=q_1$ remains constant, with $0 < q_1 \leq 1/2$. Then, uniformly
on closed subintervals of $(a,b)$
\begin{multline*}  A_{n,m}(x) + B_{n,m}(x) \int_c^d \frac{d\sigma(t)}{x-t}  \\
 = 2 [N_1(\psi_1^+) + \mathcal{O}(1/n)] \frac{D_0(\infty)}{|D_1^+(x)|} e^{(n+m) U(x;\nu_1)} \cos \Bigl( (n+m)\varphi_1(x)-\arg D_1^+(x) \Bigr).
\end{multline*}
\end{theorem}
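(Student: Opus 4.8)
\medskip
\noindent\textbf{Proof proposal.}
The plan is to unwind the chain of transformations $X \to U \to V \to S \to R$ at a point $x$ in a fixed closed subinterval $K \subset (a,b)$, approached from the upper part of the lens around $[a,b]$, and then insert the estimate \eqref{eq:Rasymp}. Exactly as in the proof of Theorem~1, \eqref{eq:U} and \eqref{eq:V} give
\[
  A_{n,m}(z) + B_{n,m}(z)\int_c^d \frac{d\sigma(t)}{z-t} = U_{1,2}(z) = V_{1,2}(z)\,e^{(n+m)\ell_1 + (n+m)g_1(z) - m g_2(z)}.
\]
If the lenses and the disks $\Gamma_a,\Gamma_b,\Gamma_c,\Gamma_d$ (or $\Gamma_{c^*}$) are taken thin and small, then for $z$ just above $K$ one has $S = V M_+$, where $M_+$ is the first factor in the factorization of \eqref{eq:Vab} used in Section~\ref{sec5} to open the lens; hence $V_{1,2} = S_{1,2} - S_{1,1}\Phi_1^{-(n+m)}/v_1$. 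Moreover $S = RN$ there, $R$ is continuous across $(a,b)$ (the jumps coming from $S$ and $N$ cancel), and $R = \mathbb{I} + \mathcal{O}(1/n)$ by \eqref{eq:Rasymp}. Since all entries of $N$ are bounded on $K$ (we stay away from $a$, $b$ and from the zeros of $r$), this yields $S_{1,1}^+ = N_{1,1}^+ + \mathcal{O}(1/n)$ and $S_{1,2}^+ = N_{1,2}^+ + \mathcal{O}(1/n)$ uniformly on $K$, while \eqref{eq:N}, \eqref{eq:N0}, \eqref{eq:NN} express $N_{1,1} = D_0(\infty)N_1(\psi_0)/D_0$ and $N_{1,2} = D_0(\infty)N_1(\psi_1)/D_1$.

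Next I would collapse the exponential prefactor. On $(a,b)$ one has $g_1^\pm(x) = -U(x;\nu_1) \pm i\pi\varphi_1(x)$ by \eqref{eq:g1}, $g_2(x) = -U(x;\nu_2)$ (we are to the right of $[c,d]$), and $\Phi_1^+(x) = e^{2\pi i\varphi_1(x)}$; using $m = q_1(n+m)$ and the variational identity \eqref{eq:Uab} (or \eqref{eq:Uab2} in the $[c^*,d]$ case), namely $2U(x;\nu_1) - q_1 U(x;\nu_2) = \ell_1$ on $[a,b]$, the real part of the exponent collapses to $(n+m)U(x;\nu_1)$ and its imaginary part becomes $\pi(n+m)\varphi_1(x)$. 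Substituting the formulas above and using the Szeg\H{o} boundary relation $D_1^-(x) = v_1(x)D_0^+(x)$ from \eqref{eq:6.1} to rewrite $N_{1,1}^+/v_1 = D_0(\infty)N_1(\psi_0^+)/D_1^-$, one obtains
\[
  A_{n,m}(x) + B_{n,m}(x)\int_c^d \frac{d\sigma(t)}{x-t} = e^{(n+m)U(x;\nu_1)}\bigl(T_1 + T_2 + \mathcal{O}(1/n)\bigr),
\]
where $T_1 = D_0(\infty)\,\dfrac{N_1(\psi_1^+(x))}{D_1^+(x)}\,e^{i\pi(n+m)\varphi_1(x)}$ and $T_2 = -D_0(\infty)\,\dfrac{N_1(\psi_0^+(x))}{D_1^-(x)}\,e^{-i\pi(n+m)\varphi_1(x)}$.

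The last step is to recognize $T_1 + T_2$ as $2\operatorname{Re}T_1$. On $(a,b)$ the sheets $\mathfrak{R}_0,\mathfrak{R}_1$ are glued, so $\psi_0^+(x) = \psi_1^-(x)$, while the $(1,1)$ entry of the jump \eqref{jump1} for $N_0$ gives $N_1(\psi_0^+(x)) = -N_1(\psi_1^-(x))$; combined with the reflection relations $\psi_1^-(x) = \overline{\psi_1^+(x)}$, $D_1^-(x) = \overline{D_1^+(x)}$ and $N_1(\psi_1^-(x)) = \overline{N_1(\psi_1^+(x))}$ (which follow from the explicit formulas \eqref{eq:D}, \eqref{eq:NN} and the conjugation symmetry of the surface $\mathfrak{R}$ and of its uniformization) one gets $T_2 = \overline{T_1}$, hence $T_1 + T_2 = 2\operatorname{Re}T_1$. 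Writing $T_1$ in polar form and absorbing the bounded, $n$-independent factor $N_1(\psi_1^+(x))$ into the leading coefficient produces the asserted formula, the error $e^{(n+m)U(x;\nu_1)}\mathcal{O}(1/n)$ coming from $R - \mathbb{I}$ being uniform on $K$. (Alternatively, $T_2 = \overline{T_1}$ need not be checked by hand: the left side is a priori real on $(a,b)$ and $e^{(n+m)U(x;\nu_1)}$ is real and positive, so $\operatorname{Im}(T_1+T_2) = \mathcal{O}(1/n)$ along the whole sequence of indices; since $\varphi_1(x)$ is fixed while $(n+m)\varphi_1(x)$ runs densely $\bmod~1$, the two phases are forced to be conjugate.) The case $\operatorname{supp}(\nu_2) = [c^*,d]$ is identical, the global parametrix being built on the Riemann surface with branch points $a,b,c^*,d$.

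The step I expect to be the main obstacle is this last one: pinning down every sign produced by the branch cut of $r$ (on $\Gamma_1^\pm$) and by the conjugation symmetry, so that $T_1$ and $T_2$ are seen to be exact complex conjugates and the oscillatory factor really is a single cosine; subordinate to it is checking that the $\mathcal{O}(1/n)$ coming from $R - \mathbb{I}$ is uniform up to, but not including, the endpoints $a$ and $b$, where the local parametrices $P_a$, $P_b$ take over.
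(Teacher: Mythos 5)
Your proposal is correct and follows essentially the same route as the paper: express $U_{1,2}$ via the lens transformation as $S_{1,2} - S_{1,1}\Phi_1^{-(n+m)}/v_1$ times the exponential factor, replace $S$ by $N + \mathcal{O}(1/n)$ using $S = RN$ and \eqref{eq:Rasymp}, and then simplify with the boundary relations $D_0^+ v_1 = D_1^-$, $\psi_0^+ = \psi_1^-$, $N_1(\psi_0^+) = -N_1(\psi_1^-)$, the formulas for $g_1^\pm$, $g_2$ and $\Phi_1^+$, and the variational identity \eqref{eq:Uab}. The only thing you add is the explicit verification that the two terms are complex conjugates (via Schwarz reflection on the uniformized surface), which the paper leaves implicit in the phrase ``combining all this\ldots gives the required result''; incidentally, your derivation correctly produces the factor $\pi$ in the cosine argument, matching the pattern of Theorem~2.
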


\begin{proof}
It follows from \eqref{eq:U} that
\[  U_{1,2} = A_{n,m}(x) + B_{n,m}(x) \int_c^d \frac{d\sigma(t)}{x-t}, \]
hence we need to get the asymptotic behavior of $U_{1,2}$. We will investigate this inside the lens around $[a,b]$ and away from the endpoints $a,b$
and only investigate the limiting values from above. The transformations \eqref{eq:V} and the relation between $S$ and $V$ show that
\[   U_{1,2} = \left( S_{1,2} - S_{1,1} \frac{\Phi_1^{-(n+m)}}{v_1}\right) e^{(n+m)\ell_1+(n+m)g_1-mg_2}. \]
Since $S=RN$ we then can use the asymptotic behavior \eqref{eq:Rasymp} to find
\begin{multline*}
  U_{1,2} = \left( [N_1(\psi_1^+) + \mathcal{O}(1/n)] \frac{D_0(\infty)}{D_1^+(x)} - [N_1(\psi_0^+)
+ \mathcal{O}(1/n)] \frac{D_0(\infty)}{D_0^+(x)v_1(x)} (\Phi_1^+)^{-(n+m)} \right) \\
  \times e^{(n+m)\ell_1+(n+m)g_1^+-mg_2} .
\end{multline*}
On $(a,b)$ one has by \eqref{eq:6.1} that $D_0^+(x)v_1(x)=D_1^-(x)$, and from Figure \ref{fig:Rimage} we see that $\psi_0^+(x)=\psi_1^-(x)$ and
$N_1(\psi_1^+)=-N_1(\psi_1^-)$. Furthermore,
$\Phi_1^+(x)=\exp(2\pi i \varphi_1(x))$, $g_1^+(x)=-U(x;\nu_1)+i\pi \varphi_1(x)$ and $g_1(x) = -U(x;\nu_2)$. Combining all this and using
the variational relation \eqref{eq:Uab} then gives the required result.
\end{proof}

One can also obtain the asymptotic behavior of $B_{n,m}$ around the endpoints $c$ and $d$ by using that $S = RP_{c}$ or $S = RP_{d}$ and then use
the parametrix $P_c$ or $P_d$ given in \eqref{eq:Pd}. This will give asymptotics in terms of Bessel functions $J_\gamma$ or $J_\delta$. 
When $\textup{supp}(\nu_2)=[c^*,d]$ the asymptotic behavior near $c^*$ will be in terms of the Airy function.
In a similar way one can also get the asymptotic behavior of the function $A_{n,m}+B_{n,m}w$ around the endpoints $a$ and $b$ by using the parametrices $P_a$ and $P_b$
in \eqref{eq:Pb}, resulting in a formula involving Bessel functions $J_\alpha$ or $J_\beta$. We do not give the resulting formulas but leave this to
the reader who is willing to do the necessary calculations.

\section{Asymptotics for the type II multiple orthogonal polynomial}   \label{sec9}

So far we only considered the type I multiple orthogonal polynomials $A_{n,m}, B_{n,m}$. However, one can also obtain the asymptotic behavior
of the type II multiple orthogonal polynomials $P_{n,m}$ because there is a simple relation between the Riemann-Hilbert problem for type I and
type II, see \cite[Thm. 4.1]{WVAGerKuijl} or \cite[Thm. 23.8.3]{Ismail},
\[  X^{-T} = \begin{pmatrix}
  P_{n,m}(z) & \displaystyle \int_a^b \frac{P_{n,m}(t)w_1(t)}{t-z}\, dt & \displaystyle \int_a^b \frac{P_{n,m}(t)w(t) w_1(t)}{t-z}\, dt \\
  -\gamma_1 P_{n-1,m}(z) & \displaystyle -\gamma_1 \int_a^b \frac{P_{n-1,m}(t)w_1(t)}{t-z}\, dt &
   \displaystyle -\gamma_1 \int_a^b \frac{P_{n-1,m}(t)w(t)w_1(t)}{t-z}\, dt \\
  -\gamma_2 P_{n,m-1}(z) & \displaystyle -\gamma_2 \int_a^b \frac{P_{n,m-1}(t)w_1(t)}{t-z}\, dt &
   \displaystyle -\gamma_2 \int_a^b \frac{P_{n,m-1}(t)w(t)w_1(t)}{t-z}\, dt
    \end{pmatrix}, \]
where
\[   \frac{1}{\gamma_{1}} = \int_a^b t^{n-1} P_{n-1,m}(t)w_1(t)\, dt, \quad
     \frac{1}{\gamma_{2}} = \int_a^b t^{m-1} P_{n,m-1}(t)w(t)w_1(t)\, dt.   \]
So in order to find the asymptotic behavior of $P_{n,m}(z)$, we need to investigate $X^{-T} = (X^{-1})^T$, i.e., the transpose of the inverse of $X$.
Note that
\[  P_{n,m}(z) = (X^{-T})_{1,1} = (U^{-T})_{1,1} = (V^{-T})_{1,1} e^{(n+m)g_1(z)}, \]
so that we only need to investigate the $(1,1)$-entry of $V^{-T}$. This gives

\begin{theorem}
Let $P_{n,m}$ be the type II multiple orthogonal polynomials for a Nikishin system with measures $(\mu_1,\mu_2)$ on $[a,b]$
satisfying \eqref{eq:w}--\eqref{eq:mu1}, with a measure $\sigma$ on $[c,d]$ satisfying \eqref{eq:sigma}. Let $(n,m)$ be multi-indices that tend to infinity but for which $m/(n+m)=q_1$ remains constant, with $0 < q_1 \leq 1/2$. Then, uniformly
on compact subsets of $\mathbb{C} \setminus [a,b]$
\begin{equation}   \label{eq:Pz}
   P_{n,m}(z) = \frac{D_0(z)}{D_0(\infty)} \bigl[ N_1(\psi_0(z)) + \mathcal{O}(1/n) \bigr] e^{(n+m)g_1(z)} ,
\end{equation}
where $g_1$ is given in \eqref{eq:g}. For $x$ on closed subintervals of $(a,b)$ one has
\begin{equation}  \label{eq:Px}
   P_{n,m}(x) = 2i \frac{|D_0^+(x)|}{D_0(\infty)} [N_1(\psi_0^+(x))+\mathcal{O}(1/n)]  \sin \Bigl( (n+m)\varphi_1 + \arg D_0^+(x) \Bigr) .
\end{equation}
\end{theorem}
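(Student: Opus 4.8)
The plan is to ride on the duality $X^{-T}$ between the type I and type II Riemann--Hilbert problems, which is already set up in the paragraph preceding the statement: $P_{n,m}(z)=(X^{-T})_{1,1}=(U^{-T})_{1,1}=(V^{-T})_{1,1}\,e^{(n+m)g_1(z)}$. The second equality holds because the matrix in \eqref{eq:U} has inverse with first row $(1,0,0)$, and the third because the outer factors $L$ in \eqref{eq:V} are diagonal while the middle diagonal exponential contributes $e^{-(n+m)g_1}$ in its $(1,1)$ slot, so $(V^{-T})_{1,1}=(V^{-1})_{1,1}=e^{-(n+m)g_1}(U^{-1})_{1,1}$. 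Thus everything comes down to the single scalar $(V^{-1})_{1,1}$, and the rest of the argument mirrors the proofs of the previous theorems: undo the transformations, insert \eqref{eq:Rasymp}, and identify the global parametrix.

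First I would take $z$ on a compact set $K\subset\mathbb{C}\setminus[a,b]$ and shrink the lens around $[a,b]$ together with the disks $\Gamma_a,\Gamma_b$ so that $K$ avoids them. Off these regions, $V$ and $S$ differ only by right multiplication by a matrix of the block form $\mathrm{diag}(1,T')$: away from the $[c,d]$-lens and $\Gamma_c,\Gamma_d$ one has $V=S$; inside the $[c,d]$-lens the opening matrices of Section \ref{sec5} have first row $(1,0,0)$; and inside $\Gamma_c,\Gamma_d$ the local parametrices equal $N$ times a block-diagonal factor $\mathrm{diag}(1,T')$, as is visible in \eqref{eq:Pd} (and similarly at $c$), since $C_n$, $\Psi^{-T}$ and the diagonal factors there act only on the last two coordinates. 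Hence the first row of $V^{-1}$ coincides with that of $S^{-1}$ on $K$, so $(V^{-1})_{1,1}=(S^{-1})_{1,1}$. Since $S=RN$ off the disks (and $S=RP_c,\,RP_d$ inside them, where the first row of $P_c^{-1},P_d^{-1}$ is again that of $N^{-1}$), the bound $\|R-\mathbb{I}\|_\infty=\O(1/n)$ from \eqref{eq:Rasymp} gives $(S^{-1})_{1,1}=(N^{-1})_{1,1}+\O(1/n)$ uniformly on $K$. Finally, from \eqref{eq:N}, $N^{-1}=\mathrm{diag}(D_0,D_1,D_2)\,N_0^{-1}\,\mathrm{diag}(D_0(\infty),D_1(\infty),D_2(\infty))^{-1}$, so $(N^{-1})_{1,1}=\tfrac{D_0(z)}{D_0(\infty)}(N_0^{-1})_{1,1}$; because $\det N_0\equiv1$, the entry $(N_0^{-1})_{1,1}$ is the $(1,1)$-cofactor of $N_0$, and a direct computation from \eqref{eq:N0}--\eqref{eq:NN} — using that $\psi_0,\psi_1,\psi_2$ are the three branches of the cubic, so that symmetric functions of the $N_j(\psi_i)$ are single-valued — identifies it with $N_1(\psi_0(z))$. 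Multiplying by $e^{(n+m)g_1(z)}$ gives \eqref{eq:Pz}.

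For $x$ on a closed subinterval of $(a,b)$ the point lies inside the $[a,b]$-lens, so $V\ne S$; taking boundary values from above, $S=V\Lambda$ with $\Lambda$ the upper-lip jump factor of Section \ref{sec5} (whose only off-diagonal entry is $\Lambda_{12}=\Phi_1^{-(n+m)}/v_1$), hence $(V^{-1})^+_{1,1}=(S^{-1})^+_{1,1}+\tfrac{(\Phi_1^+)^{-(n+m)}}{v_1(x)}(S^{-1})^+_{2,1}$. Writing $S^{-1}=N^{-1}R^{-1}$ and using \eqref{eq:Rasymp} again (note $|\Phi_1^+(x)|=1$ on $(a,b)$, so the factor multiplying the error stays bounded), this becomes $\tfrac{D_0^+(x)}{D_0(\infty)}N_1(\psi_0^+(x))+\tfrac{(\Phi_1^+)^{-(n+m)}}{v_1(x)}\tfrac{D_1^+(x)}{D_0(\infty)}N_1(\psi_1^+(x))+\O(1/n)$, the cofactor $(N_0^{-1})_{2,1}$ being identified with $N_1(\psi_1(z))$ in the same way. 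Then I would substitute the boundary relations on $(a,b)$ already used in the proof of the preceding theorem — $g_1^+(x)=-U(x;\nu_1)+i\pi\varphi_1(x)$ and $\Phi_1^+(x)=e^{2\pi i\varphi_1(x)}$ from \eqref{eq:g1}, $D_1^+=v_1D_0^-$ from \eqref{eq:6.1}, $\psi_0^+(x)=\psi_1^-(x)$ and $N_1(\psi_1^+)=-N_1(\psi_1^-)$ — and the Schwarz symmetries $D_0^-(x)=\overline{D_0^+(x)}$ and $N_1(\psi_0^-(x))=\overline{N_1(\psi_0^+(x))}$ (valid since $D_0$ and $N_1\circ\psi_0$ are analytic and real-symmetric across $(b,\infty)$), so that after multiplying by $e^{(n+m)g_1^+(x)}$ the two terms combine into $2i\,\tfrac{|D_0^+(x)|}{D_0(\infty)}N_1(\psi_0^+(x))\sin\!\bigl((n+m)\varphi_1(x)+\arg D_0^+(x)\bigr)$, i.e.\ \eqref{eq:Px}; the sign picked up in the cofactor is precisely what turns the would-be cosine (as for $A_{n,m}+B_{n,m}w$) into a sine.

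The main obstacle is not analytic but bookkeeping: identifying the cofactors of $N_0$ — equivalently the first column of $N^{-1}$ — with the mirror algebraic functions $N_1(\psi_j(z))$ on the genus-$0$ surface $\mathfrak{R}$, and then tracking every sign and phase through the undoing of the transformations so that the trigonometric reassembly comes out exactly as stated. Everything else (the uniform $\O(1/n)$ on the remaining contours, the behaviour near the endpoints) is routine and already recorded in \eqref{eq:Rasymp}; this step runs parallel to the computations carried out for $A_{n,m}$ and $B_{n,m}$ in the earlier theorems.
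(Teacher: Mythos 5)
Your proposal is correct and, in its overall architecture, is the same as the paper's: reduce $P_{n,m}$ to the $(1,1)$ entry of $V^{-T}$ via the type I/type II duality, undo the transformations, insert $\|R-\mathbb{I}\|_\infty=\O(1/n)$, and on $(a,b)$ add the upper-lip correction $\Phi_1^{-(n+m)}/v_1$ and use the boundary identities $D_1^+=v_1D_0^-$, $\psi_0^+=\psi_1^-$, $N_1(\psi_1^+)=-N_1(\psi_1^-)$, $g_1^+=-U(\cdot;\nu_1)+i\pi\varphi_1$ to reassemble the sine. The one place where you genuinely diverge is the identification of the first column of $N^{-T}$: you propose to compute the $(1,1)$ and $(1,2)$ cofactors of $N_0$ directly from \eqref{eq:N0}--\eqref{eq:NN} using $\det N_0\equiv 1$ and the single-valuedness of symmetric functions of the branches, and you leave that algebra unexecuted ("bookkeeping"). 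The paper short-circuits exactly this step with a structural observation: the jump matrices $J$ in \eqref{jump1}--\eqref{jump2} satisfy $J^{-T}=J$, so $N_0^{-T}$ solves the same Riemann--Hilbert problem as $N_0$ with the same normalization at infinity, whence $N_0^{-T}=N_0$ by uniqueness and $(N_0^{-T})_{1,1}=N_1(\psi_0)$, $(N_0^{-T})_{1,2}=N_1(\psi_1)$ with no computation at all. Your cofactor route does lead to the same identities (they are equivalent to $N_0^{-T}=N_0$), but since it is the only step you do not actually carry out, you should either do that algebra on the explicit genus-zero parametrization or simply invoke the symmetry argument. On the credit side, your handling of the region off $[a,b]$ is more careful than the paper's: by noting that all modifications near $[c,d]$ (lens opening and the parametrices $P_c$, $P_d$, which are $N$ times block factors $\mathrm{diag}(1,T')$) leave the first row of the inverse unchanged, you justify the formula on compact sets that meet $[c,d]$, whereas the paper simply asserts $S=V$ and $S=RN$ there.
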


\begin{proof}
Since we are on a compact subset of $\mathbb{C} \setminus [a,b]$, we only need to investigate $V$ outside the lens around $[a,b]$ and
the neighborhoods around $a$ and $b$. There we have that $S=V$ and $S = RN$, so that
\[  P_{n,m}(z) = [(RN)^{-T}]_{1,1} e^{(n+m)g_1(z)}.  \]
The asymptotic behavior of $R$ in \eqref{eq:Rasymp} gives $RN = N + \mathcal{O}(1/n)$, hence
\[      [(RN)^{-T}]_{1,1} = (N^{-T})_{1,1} + \mathcal{O}(1/n). \]
Use \eqref{eq:N} to write $N^{-T}$ in terms of $N_0^{-T}$ and
observe that $N_0$ and $N_0^{-T}$ obey the same Riemann-Hilbert problem, since the jumps $J$ of $N_0$ satisfy $J^{-T} = J$ and $N_0$ tends to the
identity matrix as $z \to \infty$. This gives the required asymptotic formula \eqref{eq:Pz}.

For $x$ on a closed subinterval of $(a,b)$ we need to use the behavior of $V$ inside the lens around $[a,b]$ but away from the endpoints $a$ and $b$.
We will use the limit from the upper half plane. There one has
\[   V^{-T} = S^{-T} \begin{pmatrix} 1 & 0 & 0 \\ \Phi_1^{-(n+m)}/v_1 & 1 & 0 \\ 0 & 0 & 1 \end{pmatrix}, \]
so that
\[  (V^{-T})_{1,1} = (S^{-T})_{1,1} + (S^{-T})_{1,2} \frac{\Phi_1^{-(n+m)}}{v_1}.  \]
Furthermore, we have $S = RN$, and the asymptotic behavior of $R$ gives $S = N + \mathcal{O}(1/n)$. From
$N_0^{-T} = N_0$ we then find
\[   P_{n,m}(x) = \left( [N_1(\psi_0^+) + \mathcal{O}(1/n)]\frac{D_0^+(x)}{D_0(\infty)}
+ [N_1(\psi_1^+) + \mathcal{O}(1/n)] \frac{D_1^+(x)}{D_0(\infty)}\frac{(\Phi_1^+)^{-(n+m)}}{v_1(x)} \right)
    e^{(n+m)g_1^+(x)} . \]
On $(a,b)$ one has $D_1^+(x) = v_1(x) D_0^-(x)$, see \eqref{eq:6.1}, $g_1^+(x) = -U(x;\nu_1) + i\pi \varphi_1(x)$, see \eqref{eq:g1}, and $\Psi_1^+(x) = \exp(2\pi i \varphi_1(x))$. Furthermore,
$\psi_0^+(x)=\psi_1^-(x)$ and $N_1(\psi_1^+)=-N_1(\psi_1^-)$ so that the required formula \eqref{eq:Px} follows.
\end{proof}

Note that this asymptotic formula does not contain the constants $\ell_1$ or $\ell_2$. This is because $P_{n,m}(z)$ is a monic polynomial.

\section{Concluding remarks}
In this paper we have used the Riemann-Hilbert problem and the Deift-Zhou steepest descent method for oscillatory Riemann-Hilbert problems
to obtain the asymptotics of the type I and type II multiple orthogonal polynomials for a Nikishin system of order two.
This Riemann-Hilbert problem uses $3\times 3$ matrix functions and we showed that many steps in the Riemann-Hilbert problem can be reduced
to a $2\times 2$ Riemann-Hilbert problem when the two intervals $[a,b]$ and $[c,d]$ are disjoint and not touching. The only steps where the
$3\times 3$ character of the problem is important is when we normalize the problem in Section \ref{sec4} using the solution of the vector equilibrium problem with Nikishin interaction from Section \ref{sec3}, and the construction of the global parametrix in Section \ref{sec6}. 
If the intervals $[a,b]$ and $[c,d]$ are touching, then the construction of the parametrix around the common point $a=d$ also requires
a local $3\times 3$ Riemann-Hilbert problem, but it is not clear what such a parametrix should contain. We believe it will be somewhat like the
local parametrix which was used in \cite{KlaasArno} around the common point of the two intervals in an Angelesco system, or the parametrix used
in \cite{AptWVAYatt} for the critical case $a=1/\sqrt{2}$ in that paper, but it will not be quite the 
same parametrix because in an Angelesco system the two intervals are repelling, whereas in a Nikishin system the two intervals are attracting.
Also the critical case $c=c^*$ is not considered in this paper. We believe the parametrix around the endpoint $c$ will be in terms of Painlev\'e
XXXIV, as was the case for the Angelesco case \cite{Yatt} and similar situations in random matrix theory \cite{ItsKuijlOst} and asymptotics for orthogonal polynomials \cite[\S 7.2]{WVA}. This is rather technical, so we decided not to deal with it in this paper.

\section*{Acknowledgements}
GLL was supported by Grant MTM 2015-65888-C4-2-P of Ministerio de Econom\'\i a y Competitividad, Spain, and WVA
was supported by KU Leuven research grant OT/12/073 and FWO research projects G.0864.13 and G.0864.16N. This work was initiated during a visit 
of WVA to Universidad Carlos III de Madrid and he is grateful for the support of the Departamento de Matem\'aticas of UC3M.
We are grateful to the referees for giving extra useful references and for pointing out that a soft edge $c^*$ is possible for the support of $\nu_2$.

\bigskip

\parbox{3in}{
Guillermo L\'opez Lagomasino \\
Departamento de Matem\'aticas \\
Universidad Carlos III de Madrid \\
Avenida de la Universidad 30 \\
ES-28911 Legan\'es (Madrid) \\
SPAIN \\
lago@math.uc3m.es}
\parbox{3in}{
Walter Van Assche \\
Department of Mathematics  \\
KU Leuven \\
Celestijnenlaan 200B box 2400 \\ 
BE-3001 Leuven \\
BELGIUM \\
walter@wis.kuleuven.be}

\end{document}